\documentclass{amsart}%
\usepackage{amsfonts}
\usepackage{amsmath}
\usepackage{amssymb}
\usepackage{graphicx}%
\setcounter{MaxMatrixCols}{30}
\providecommand{\U}[1]{\protect\rule{.1in}{.1in}}
\newtheorem{theorem}{Theorem}
\theoremstyle{plain}

\newtheorem{corollary}{Corollary}

\newtheorem{definition}{Definition}
\newtheorem{example}{Example}

\newtheorem{lemma}{Lemma}

\newtheorem{proposition}{Proposition}

\numberwithin{equation}{section}
\begin{document}
\title[Weakly J-ideals of Commutative Rings ]{Weakly J-ideals of Commutative Rings }
\author{Hani A. Khashan }
\address{Department of Mathematics, Faculty of Science, Al al-Bayt University,Al
Mafraq, Jordan.}
\email{hakhashan@aabu.edu.jo.}
\author{Ece Yetkin Celikel}
\address{Department of Electrical-Electronics Engineering, Faculty of Engineering,
Hasan Kalyoncu University, Gaziantep, Turkey.}
\email{ece.celikel@hku.edu.tr, yetkinece@gmail.com.}
\thanks{This paper is in final form and no version of it will be submitted for
publication elsewhere.}
\date{February, 2021}
\subjclass{13A15, 13A18, 13A99.}
\keywords{weakly $J$-ideal, $J$-ideal, quasi $J$-ideal.}

\begin{abstract}
Let $R$ be a commutative ring with non-zero identity. In this paper, we
introduce the concept of weakly $J$-ideals as a new generalization of
$J$-ideals. We call a proper ideal $I$ of a ring $R$ a weakly $J$-ideal if
whenever $a,b\in R$ with $0\neq ab\in I$ and $a\notin J(R)$, then $a\in I$.
Many of the basic properties and characterizations of this concept are
studied. We investigate weakly $J$-ideals under various contexts of
constructions such as direct products, localizations, homomorphic images.
Moreover, a number of examples and results on weakly $J$-ideals are discussed.
Finally, the third section is devoted to the characterizations of these
constructions in an amagamated ring along an ideal.

\end{abstract}
\maketitle

\section{Introduction}

We assume throughout the whole paper, all rings are commutative with a
non-zero identity. For any ring $R$, by $U(R),$ $N(R)$ and $J(R)$, we denote
the set of all units in $R$, the nilradical and the Jacobson radical of $R,$
respectively. In 2017, Tekir et al., \cite{Tek} introduced the concept of
$n$-ideals. A proper ideal $I$ of a ring $R$ is called an $n$-ideal if
whenever $a,b\in R$ and $ab\in I$ such that $a\notin N(R)$, then $b\in I$.
Recently, Khashan and Bani-Ata in \cite{Hani} introduced the notion of
$J$-ideals as a generalization of $n$-ideals in commutative rings, as follows:
A proper ideal $I$ of a ring $R$ is called a $J$-ideal if whenever $a,b\in R$
with $ab\in I$ and $a\notin J(R)$, then $b\in I$. In \cite{Haniece}, the
authors generalized $J$-ideals and defined quasi $J$-ideals as those ideals
$I$ for which $\sqrt{I}=\left\{  x\in R:x^{n}\in I\text{ for some }n\in%
\mathbb{N}
\right\}  $ are $J$-ideals. In this paper, we define and study weakly
$J$-ideals of commutative rings as a new generalization of $J$-ideals. We call
a proper ideal $I$ of $R$ a weakly $J$-ideal if $0\neq ab\in I$ where $a,b\in
R$ and $a\notin J(R)$ imply $b\in I.$ Clearly, every $J$-ideal is a weakly
$J$-ideal. However, the converse is not true in general. Indeed, (by
definition) the zero ideal of any ring is weakly $J$-ideal but for example
$\left\langle \bar{0}\right\rangle $ is not a $J$-ideal of the ring $%
\mathbb{Z}
_{6}$. For a non-trivial example, we present Example \ref{Ex}.

Among many other results in this paper, in section 2, we start with a
characterization for quasi-local rings in terms of weakly $J$-ideals (Theorem
\ref{quasi}). Many equivalent characterizations of weakly $J$-ideals for any
commutative ring are presented in Proposition \ref{J(R)} and Theorem \ref{eq}.
As a generalization of prime ideals the concept of weakly prime ideals was
first introduced in \cite{Andw} by Anderson et al. A proper ideal of a ring
$R$ is said to be a weakly prime ideal if whenever $a,b\in R$ with $0\neq
ab\in I$, then $a\in I$ or $b\in I$. We give examples to show that weakly
prime and weakly $J$-ideals are not comparable. Then we justify the
relationships between these two concepts in Proposition \ref{wp1} and
Corollary \ref{wp2}. Further, for two weakly $J$-ideals $I_{1}$ and $I_{2}$ of
a ring $R$, we show that $I_{1}\cap I_{2}$ and $I_{1}+I_{2}$ are weakly
$J$-ideals of $R$, but $I_{1}I_{2}$ is not so (see Propositions \ref{int},
\ref{sum} and Example \ref{product}).

Recall from \cite{Bouvier} (resp. \cite{Haniece}) that a ring $R$ is called
presimplifiable (resp. quasi presimplifiable) if whenever $a,b\in R$ with
$a=ab$, then $a=0$ or $b\in U(R)$ (resp. $a\in N(R)$ or $b\in U(R)$). It is
well known from \cite{Anderson1} that presimplifiable property does not pass
in general to homomorphic images. However, we show that this holds under a
certain condition: If $I$ a weakly $J$-ideal of a (quasi) presimplifiable ring
$R$, then $R/I$ is a (quasi) presimplifiable ring (see Corollary \ref{pre} and
Proposition \ref{qpre}). Moreover, we investigate weakly $J$-ideals under
various contexts of constructions such as direct products, localizations,
homomorphic images (see Propositions \ref{cart}, \ref{S} and \ref{f}).

Let $R$ be a commutative ring with identity and $M$ an $R$-module. We recall
that $R(+)M=\left\{  (r,m):r\in R,m\in M\right\}  $ with coordinate-wise
addition and multiplication defined as $(r_{1},m_{1})(r_{2},m_{2})=(r_{1}%
r_{2},r_{1}m_{2}+r_{2}m_{1})$ is a commutative ring with identity $(1,0)$.
This ring is called the idealization of $M$. For an ideal $I$ of $R$ and a
submodule $N$ of $M$, $I(+)N$ is an ideal of $R(+)M$ if and only if
$IM\subseteq N$. Moreover, the Jacobson radical of $R(+)M$ is
$J(R(+)M)=J(R)(+)M$, \cite{Anderson2}. We clarify the relationships between
weakly $J$-ideals in a ring $R$ and in an idealization ring $R(+)M$ in Theorem
\ref{6}. The idealization can be used to extend results about ideals to
modules and to provide interesting examples of commutative rings.

In Section 3, for a ring $R$ and an ideal $J$ of $R$, we examine weakly
$J$-ideals of an amalgamated ring $R\Join^{f}J$ along $J$. Some
characterizations of (weakly) $J$-ideals of the form $I\Join^{f}J$ and
$\bar{K}^{f}$ of the amalgamation $R\Join^{f}J$ where $J\subseteq J(S)$ are
given (see Theorems \ref{9}, \ref{10} and Corollaries \ref{cj}, \ref{12}).
Finally, we give various counter examples associated with the stability of
weakly $J$-ideals in these algebraic structures (see Examples \ref{14},
\ref{15}, \ref{16}, \ref{17}).

\section{Properties of weakly $J$-ideals}

In this section, we discuss some of the basic definitions and fundamental
results concerning weakly $J$-ideal. Among many other properties, we present a
number of characterizations of such class of ideals.

\begin{definition}
Let $R$ be a ring. A proper ideal $I$ of $R$ is called a weakly $J$-ideal if
whenever $a,b\in R$ such that $0\neq ab\in I$ and $a\notin J(R)$, then $b\in
I$.
\end{definition}

Clearly any $J$-ideal is a weakly $J$-ideal. The converse is not true. For a
non trivial example we have the following:

\begin{example}
\label{Ex}Consider the idealization ring $R=%
\mathbb{Z}
(+)\left(
\mathbb{Z}
_{2}\times%
\mathbb{Z}
_{2}\right)  $ and consider the ideal $I=0(+)\left\langle (\bar{1},\bar
{0})\right\rangle $ of $R$. Then $I$ is not a $J$-ideal of $R$ since for
example, $(2,(\bar{0},\bar{0}))(0,(\bar{1},\bar{1}))=(0,(\bar{0},\bar{0}))\in
I$ and $(2,(\bar{0},\bar{0}))\notin J(R)$ but $(0,(\bar{1},\bar{1}))\notin I$.
On the other hand, $I$ is a weakly $J$-ideal of $R$. Indeed, let $(r_{1}%
,(\bar{a},\bar{b})),(r_{2},(\bar{c},\bar{d}))\in R$ such that $(0,(\bar
{0},\bar{0}))\neq(r_{1},(\bar{a},\bar{b}))(r_{2},(\bar{c},\bar{d}))\in I$ and
$(r_{1},(\bar{a},\bar{b}))\notin J(R)$. Then $r_{1}\neq0$ and $(r_{1}%
r_{2},r_{1}.(\bar{c},\bar{d})+r_{2}.(\bar{a},\bar{b}))\in I$. It follows that
$r_{1}r_{2}=0$ and $r_{1}.(\bar{c},\bar{d})+r_{2}.(\bar{a},\bar{b}%
)\in\left\langle (1,0)\right\rangle $ and so $r_{2}=0$ and $r_{1}.(\bar
{c},\bar{d})\in\left\langle (1,0)\right\rangle $. By assumption, we must also
have $r_{1}.(\bar{c},\bar{d})\neq(\bar{0},\bar{0})$. If $(\bar{c},\bar
{d})=(\bar{1},\bar{1})$ or $(\bar{0},\bar{1})$, then $r_{1}.(\bar{c},\bar
{d})\in\left\langle (1,0)\right\rangle $ if and only if $r_{1}\in\left\langle
2\right\rangle $ and so $r_{1}.(\bar{c},\bar{d})=(\bar{0},\bar{0})$, a
contradiction. Thus, $(\bar{c},\bar{d})\in\left\langle (\bar{1},\bar
{0})\right\rangle $ and $I$ is a weakly $J$-ideal of $R$.
\end{example}

However, the classes of $J$-ideals, quasi $J$-ideals and weakly $J$-ideals
coincide in any quasi local ring.

\begin{theorem}
\label{quasi}For a ring $R$, the following statements are equivalent:
\end{theorem}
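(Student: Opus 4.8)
The plan is to prove the stated equivalences through a short cycle of implications, the engine of which is a single containment lemma: \emph{every} weakly $J$-ideal $I$ of $R$ satisfies $I\subseteq J(R)$ (and hence so does every $J$-ideal and, via its radical, every quasi $J$-ideal). Granting this, the characterization of quasi-locality falls out because a ring is quasi-local precisely when $J(R)$ is its unique maximal ideal, equivalently when every proper ideal is contained in $J(R)$.

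First I would establish that in a quasi-local ring every proper ideal is a $J$-ideal: if $\mathfrak{m}=J(R)$ is the unique maximal ideal and $ab\in I$ with $a\notin J(R)$, then $a$ is a unit, so $b=a^{-1}(ab)\in I$. This gives the implication from quasi-localness to ``every proper ideal is a $J$-ideal''. The passage from the latter to ``every proper ideal is a weakly $J$-ideal'' is immediate since each $J$-ideal is a weakly $J$-ideal, and the passage to ``every proper ideal is a quasi $J$-ideal'' follows because, under this hypothesis, the radical $\sqrt{I}$ of any proper ideal $I$ is again a proper ideal and hence a $J$-ideal, so that $I$ is a quasi $J$-ideal by definition.

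For the reverse directions I would prove the containment lemma by the $x=x\cdot 1$ trick: if $I$ is a weakly $J$-ideal and $0\neq x\in I$, then $0\neq x\cdot 1\in I$, and were $x\notin J(R)$ this would force $1\in I$, contradicting properness; hence $x\in J(R)$, and since $0\in J(R)$ trivially we obtain $I\subseteq J(R)$. The same computation (without the nonzero caveat) handles $J$-ideals, and for a quasi $J$-ideal $I$ one applies it to the $J$-ideal $\sqrt{I}$ to get $I\subseteq\sqrt{I}\subseteq J(R)$. Consequently, if every proper ideal is a weakly (respectively quasi) $J$-ideal, then every maximal ideal $\mathfrak{m}$ satisfies $\mathfrak{m}\subseteq J(R)\subseteq\mathfrak{m}$, forcing $\mathfrak{m}=J(R)$ and hence a unique maximal ideal; this closes the cycle back to quasi-localness.

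The one place demanding care is the nonzero hypothesis in the definition of a weakly $J$-ideal: the trick $x=x\cdot 1$ is only available when $x\neq 0$, so the inclusion $I\subseteq J(R)$ must be argued for the nonzero elements of $I$ and then completed by the trivial membership $0\in J(R)$. This is the sole subtlety; everything else reduces to the unit-inversion argument in the quasi-local case and to the definitional inclusions among $J$-ideals, quasi $J$-ideals and weakly $J$-ideals.
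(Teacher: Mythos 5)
Your argument for the equivalence of (1)--(4) is correct, and its engine --- the containment $I\subseteq J(R)$ proved by the $x=x\cdot 1$ trick --- is exactly the device the paper uses; you even gain something over the paper by proving (1)$\Leftrightarrow$(2)$\Leftrightarrow$(3) directly, where the paper simply cites an earlier result on quasi $J$-ideals. But the theorem has a fifth statement, that every proper \emph{principal} ideal of $R$ is a weakly $J$-ideal, and your proposal never addresses it. This is a genuine gap, not a formality: (4)$\Rightarrow$(5) is trivial, but the whole point of including (5) is the implication (5)$\Rightarrow$(1), and your closing argument does not deliver it. You close the cycle by applying the weakly-$J$ hypothesis to a maximal ideal $\mathfrak{m}$ itself in order to get $\mathfrak{m}\subseteq J(R)$; under hypothesis (5) you are only entitled to apply that hypothesis to principal ideals, and a maximal ideal need not be principal.

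The repair is short and uses precisely your containment lemma, applied element-wise: given a maximal ideal $\mathfrak{m}$, if $\mathfrak{m}=0$ then $R$ is quasi-local and we are done; otherwise, for each $0\neq a\in\mathfrak{m}$ the ideal $\langle a\rangle$ is proper, hence by (5) a weakly $J$-ideal, and since $0\neq a\cdot 1\in\langle a\rangle$ with $1\notin\langle a\rangle$, your lemma forces $a\in J(R)$. Thus $\mathfrak{m}\subseteq J(R)\subseteq\mathfrak{m}$, so $\mathfrak{m}=J(R)$ is the unique maximal ideal. This is in fact the paper's own proof of (5)$\Rightarrow$(1); with this one paragraph added, your proof is complete and, for (1)$\Leftrightarrow$(2)$\Leftrightarrow$(3), more self-contained than the paper's.
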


\begin{enumerate}
\item $R$ is a quasi-local ring.

\item Every proper ideal of $R$ is a $J$-ideal.

\item Every proper ideal of $R$ is a quasi $J$-ideal.

\item Every proper ideal of $R$ is a weakly $J$-ideal.

\item Every proper principal ideal of $R$ is a weakly $J$-ideal.
\end{enumerate}

\begin{proof}
(1)$\Leftrightarrow$(2)$\Leftrightarrow$(3) \cite[Theorem 3]{Haniece}.

(2)$\Rightarrow$(4)$\Rightarrow$(5) Clear.

(5)$\Rightarrow$(1) Let $M$ be a maximal ideal of $R$. If $M=0$, the result
follows clearly. Otherwise, let $0\neq a\in M$. Now, $\left\langle
a\right\rangle $ is a weakly $J$-ideal and $0\neq a.1\in\left\langle
a\right\rangle $. If $a\notin J\left(  R\right)  $, then $1\in\left\langle
a\right\rangle $, a contradiction. Thus, $a\in J\left(  R\right)  $ and
$M=J\left(  R\right)  $. Therefore, $R$\ is quasi-local ring.
\end{proof}

Let $I$ be a proper ideal of $R$. We denote by $J(I)$, the intersection of all
maximal ideals of $R$ containing $I$. Next, we obtain the following
characterization for weakly $J$-ideals of $R$.

\begin{proposition}
\label{J(R)}For a proper ideal $I$ of $R$, the following statements are equivalent.
\end{proposition}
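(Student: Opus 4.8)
The plan is to route the proof through the familiar ideal-quotient reformulation, exactly as one does for weakly prime ideals. For $a\in R$ write $(I:a)=\{r\in R:ra\in I\}$ and note at the outset that one always has $I\cup(0:a)\subseteq(I:a)$, since $I\subseteq(I:a)$ and $(0:a)\subseteq(I:a)$. Thus in each of the putative equivalences the only nontrivial content is the reverse inclusion.

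First I would prove that the defining property yields the quotient identity $(I:a)=I\cup(0:a)$ for every $a\in R\setminus J(R)$. Fixing such an $a$ and taking $b\in(I:a)$, so $ab\in I$, I split into two cases: if $ab=0$ then $b\in(0:a)$, while if $ab\neq 0$ then $0\neq ab\in I$ with $a\notin J(R)$, and the definition of a weakly $J$-ideal forces $b\in I$. In either case $b\in I\cup(0:a)$, which gives the desired inclusion.

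Next I would pass from this union identity to the split form, namely $(I:a)=I$ or $(I:a)=(0:a)$ for each $a\in R\setminus J(R)$. Here I would invoke the elementary fact that an ideal equal to the union of two of its subideals must coincide with one of them: if $A=B\cup C$ with $B,C$ ideals, then choosing hypothetical $x\in B\setminus C$ and $y\in C\setminus B$ and examining $x+y\in A=B\cup C$ produces a contradiction in both cases, so $A=B$ or $A=C$. Applying this with $A=(I:a)$, $B=I$, $C=(0:a)$ gives the dichotomy. The return trip to the definition is then immediate: if $0\neq ab\in I$ with $a\notin J(R)$ then $b\in(I:a)$ but $b\notin(0:a)$, so necessarily $(I:a)=I$ and hence $b\in I$.

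The one genuinely non-routine step is the union-of-ideals lemma used in the middle implication; the remaining arguments are a direct unwinding of the definition together with the trivial inclusion noted above. Should the statement also list $I\subseteq J(R)$ among the equivalent conditions, I would verify it separately by taking $b=1$: for a nonzero $a\in I$ with $a\notin J(R)$ the defining property would yield $1\in I$, contradicting properness of $I$; hence every nonzero element of $I$ lies in $J(R)$, so $I\subseteq J(R)$, which in turn forces $J(I)=J(R)$ since then every maximal ideal of $R$ contains $I$.
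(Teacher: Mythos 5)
The statement you were given is truncated by the paper's formatting: the two equivalent conditions of Proposition \ref{J(R)} are listed in an enumerate block after the proposition header, and they are not the ideal-quotient conditions you guessed. The actual equivalence is: (1) $I$ is a weakly $J$-ideal of $R$; (2) $I\subseteq J(R)$ and whenever $a,b\in R$ with $0\neq ab\in I$, then $a\in J(I)$ or $b\in I$, where $J(I)$ denotes the intersection of all maximal ideals of $R$ containing $I$. The bulk of your argument --- the identity $(I:a)=I\cup(0:a)$ for $a\in R\setminus J(R)$, the union-of-ideals lemma, and the split form $(I:a)=I$ or $(I:a)=(0:a)$ --- is correct mathematics, but it proves a different result: it is essentially the equivalence (1)$\Leftrightarrow$(2) of the paper's Theorem \ref{eq}, not this proposition.

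What partially saves the attempt is your final, hedged paragraph: it contains exactly the two facts on which the paper's proof of Proposition \ref{J(R)} runs. First, the $b=1$ argument: if $0\neq a\in I$, then $0\neq a\cdot 1\in I$ and $1\notin I$, so $a\in J(R)$, giving $I\subseteq J(R)$; this is verbatim the paper's proof of the first half of (1)$\Rightarrow$(2). Second, the observation that $I\subseteq J(R)$ forces $J(I)=J(R)$, since then every maximal ideal of $R$ contains $I$. Once $J(I)=J(R)$ is in hand, both directions of the true statement are one-liners: for (1)$\Rightarrow$(2), the disjunction ``$a\in J(I)$ or $b\in I$'' is just the defining property restated, because $J(R)\subseteq J(I)$ always holds; for (2)$\Rightarrow$(1), if $0\neq ab\in I$ and $a\notin J(R)$, then $a\notin J(I)$ (as $J(I)\subseteq J(R)$ when $I\subseteq J(R)$), so condition (2) forces $b\in I$. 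You never write this converse direction --- you could not, not knowing what condition (2) was --- so as submitted the proposal does not prove the stated proposition; but the missing steps are immediate consequences of what you did establish, and no new idea is needed to complete it.
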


\begin{enumerate}
\item $I$ is a weakly $J$-ideal of $R$.

\item $I\subseteq J(R)$ and whenever $a,b\in R$ with $0\neq ab\in I$, then
$a\in J(I)$ or $b\in I$.
\end{enumerate}

\begin{proof}
(1)$\Rightarrow$(2) Suppose $I$ is a weakly $J$-ideal of $R$. Let $0\neq a\in
I$. Since $0\neq a\cdot1\in I$ and $1\notin I$, then $a\in J(R)$. Hence,
$I\subseteq J(R)$. The other claim of (2) follows clearly since $J(R)\subseteq
J(I)$.

(2)$\Rightarrow$(1) Suppose that $0\neq ab\in I$ and $a\notin J(R).$ Since
$I\subseteq J(R)$, we conclude that $J(I)\subseteq J(J(R))=J(R)$ and so we get
$a\notin J(I)$. Thus, $b\in I$ and $I$ is a weakly $J$-ideal of $R$.
\end{proof}

We recall that a ring $R$ is called semiprimitive if $J(R)=0$. By (2) of
Proposition \ref{J(R)}, we conclude that $0$ is the only weakly $J$-ideal in
any semiprimitive ring.

Next, we show that a weakly $J$-ideal $I$ that is not a $J$-ideal of a ring
$R$ satisfies $I^{2}=0$.

\begin{theorem}
\label{IJ}Let $I$ be a weakly $J$-ideal of a ring $R$ that is not a $J$-ideal.
Then $I^{2}=0$.
\end{theorem}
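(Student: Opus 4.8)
The plan is to prove the contrapositive-style statement by assuming $I^2 \neq 0$ and deriving that $I$ must actually be a $J$-ideal, contradicting the hypothesis. So suppose $a,b \in R$ with $ab \in I$ and $a \notin J(R)$; I want to show $b \in I$. If $ab \neq 0$, then the weakly $J$-ideal hypothesis immediately gives $b \in I$, so the only problematic case is $ab = 0$. The entire difficulty is concentrated here: I must rule out the possibility that $ab = 0$ prevents us from concluding $b \in I$.

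To handle the case $ab = 0$, the idea is to perturb the product $ab$ away from zero by adding a suitable element of $I$, thereby landing in the region where the weakly $J$-ideal property applies. Since $I^2 \neq 0$, there exist $s,t \in I$ with $st \neq 0$. The plan is to consider the modified products $a(b+t)$, $(a+s)b$, and $(a+s)(b+t)$, each of which still lies in $I$ because $at, sb, st \in I$. First I would note that $a+s \notin J(R)$: since $s \in I \subseteq J(R)$ (by Proposition \ref{J(R)}) and $a \notin J(R)$, we get $a+s \notin J(R)$. The strategy is then a case analysis: if any of these perturbed products is nonzero, the weakly $J$-ideal property applied to that product yields the desired conclusion $b \in I$ (after subtracting back the element of $I$ we added).

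The main obstacle is the degenerate subcase where all the perturbed products vanish simultaneously, i.e. $a(b+t) = 0$, $(a+s)b = 0$, and $(a+s)(b+t) = 0$, together with $ab = 0$. The plan is to show that these simultaneous vanishings are contradictory or still force $b \in I$. Expanding, $a(b+t)=0$ combined with $ab=0$ gives $at = 0$; similarly $(a+s)b = 0$ with $ab=0$ gives $sb = 0$; and then $(a+s)(b+t)=ab+at+sb+st = st$, so $(a+s)(b+t)=0$ forces $st = 0$, directly contradicting our choice $st \neq 0$. Hence the fully degenerate subcase cannot occur, and in every remaining subcase one of the perturbed products is nonzero, allowing the weakly $J$-ideal hypothesis to fire.

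Thus in all cases $b \in I$, which shows $I$ is a $J$-ideal, contradicting the assumption that $I$ is not a $J$-ideal. Therefore the premise $I^2 \neq 0$ is untenable, and we conclude $I^2 = 0$. The only genuinely delicate point is organizing the case analysis so that the choice of which perturbed product to use is always legitimate; the algebra of expanding each product is routine once the nonvanishing of $st$ is exploited to eliminate the all-zero scenario.
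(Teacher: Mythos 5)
Your proof is correct and takes essentially the same approach as the paper's: perturb $a$ and $b$ by elements of $I\subseteq J(R)$ so that the weakly $J$-ideal hypothesis can be applied, with $I^{2}\neq 0$ supplying the nonzero product in the fully degenerate case. The only difference is organizational --- the paper splits into the subcases $aI\neq 0$, $bI\neq 0$, and $aI=bI=0$, whereas you fix witnesses $s,t\in I$ with $st\neq 0$ at the outset and show the three perturbed products $a(b+t)$, $(a+s)b$, $(a+s)(b+t)$ cannot all vanish; the underlying algebra is identical.
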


\begin{proof}
Suppose $I^{2}\neq0$. We prove that $I$ is a $J$-ideal. Let $a,b\in R$ such
that $ab\in I$ and $a\notin J(R)$. If $ab\neq0$, then $b\in I$ since $I$ is a
weakly $J$-ideal. Suppose $ab=0$. If $aI\neq0$, then $0\neq ax$ for some $x\in
I$ and so $0\neq a(b+x)\in I$. Again, since $I$ is a weakly $J$-ideal, $b+x\in
I$ and so $b\in I$. If $bI\neq0$, then $by\neq0$ for some $y\in I\subseteq
J(R)$. Since $0\neq yb=(y+a)b\in I$ and clearly $y+a\notin J(R)$, then $b\in
I$. So, we may assume that $aI=bI=0$. Since $I^{2}\neq0$, then there exist
$x,y\in I$ such that $yx\neq0$. Hence, $0\neq yx=(y+a)(x+b)\in I$ and
$y+a\notin J(R)$ imply that $x+b\in I$. Therefore, $b\in I$ and $I$ is a
$J$-ideal of $R$.
\end{proof}

However an ideal $I$ satisfies $I^{2}=0$ need not be a weakly $J$-ideal. For
example, the ideal $I=0(+)4%
\mathbb{Z}
$ of $R=%
\mathbb{Z}
(+)%
\mathbb{Z}
$ satisfies $I^{2}=0$. But, $I$ is not a weakly $J$-ideal of $R$ as
$(0,0)\neq(2,0)(0,2)\in I$ with $(2,0)\notin J(R)$ and $(0,2)\notin I$.

As a corollary of Theorem \ref{IJ}, we have:

\begin{corollary}
\label{squareI}Let $I$ be a weakly $J$-ideal of a ring $R$ that is not a
$J$-ideal. Then

\begin{enumerate}
\item $I\subseteq N(R)$.

\item Whenever $M$ is an $R$-module and $IM=M$, then $M=0$.
\end{enumerate}
\end{corollary}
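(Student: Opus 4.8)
The plan is to derive both parts directly from Theorem \ref{IJ}, which tells us that a weakly $J$-ideal $I$ that fails to be a $J$-ideal satisfies $I^{2}=0$.

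For part (1), I would argue that $I^{2}=0$ forces every element of $I$ to be nilpotent. Indeed, if $x\in I$, then $x^{2}\in I^{2}=0$, so $x^{2}=0$ and hence $x\in N(R)$. This immediately gives $I\subseteq N(R)$. This step is entirely routine once Theorem \ref{IJ} is in hand.

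For part (2), suppose $M$ is an $R$-module with $IM=M$. The natural approach is to iterate: from $IM=M$ we get $M=IM=I(IM)=I^{2}M=0\cdot M=0$, using $I^{2}=0$ from Theorem \ref{IJ}. Alternatively, one could invoke a Nakayama-type argument, since by the remark following Proposition \ref{J(R)} (or by part (1) combined with the earlier observation that weakly $J$-ideals lie in $J(R)$) we know $I\subseteq J(R)$; Nakayama's lemma then yields $M=0$ directly from $IM=M$. Either route works, but the iteration using $I^{2}=0$ is the cleanest and most self-contained, so I would favor it.

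I do not anticipate a genuine obstacle here, since the corollary is a direct consequence of the already-established identity $I^{2}=0$. The only point requiring a small amount of care is ensuring that the hypothesis ``$I$ is not a $J$-ideal'' is carried through so that Theorem \ref{IJ} applies; both parts fail to hold for arbitrary weakly $J$-ideals (for instance $I=0$ satisfies $I^{2}=0$ trivially but the statements become vacuous or degenerate), so the non-$J$-ideal hypothesis is exactly what licenses the use of $I^{2}=0$.
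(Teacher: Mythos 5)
Your main argument is correct and is exactly the paper's (implicit) proof: the paper states this as an immediate corollary of Theorem \ref{IJ}, and your two steps --- $x\in I$ gives $x^{2}\in I^{2}=0$, and $M=IM=I^{2}M=0$ --- are the intended ones.

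One caution: your claim that ``either route works'' is not accurate for part (2). Nakayama's lemma requires $M$ to be \emph{finitely generated}, and the statement imposes no such hypothesis; for instance, over $R=\mathbb{Z}_{(p)}$ one has $J(R)M=M$ for $M=\mathbb{Q}$ with $M\neq 0$. So the Nakayama alternative fails for arbitrary modules, and the iteration via $I^{2}=0$ is not merely cleaner but the only one of your two routes that proves the stated result.
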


In particular, if $R$ is a reduced ring, then by Corollary \ref{squareI}, a
non-zero proper ideal $I$ is a weakly $J$-ideal if and only if $I$ is a $J$-ideal.

In the following theorem, we give some other characterizations of weakly $J$-ideals.

\begin{theorem}
\label{eq}Let $I$ be a proper ideal of a ring $R.$ Then the following are equivalent:
\end{theorem}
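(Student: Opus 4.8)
The plan is to prove the theorem by relating the weakly $J$-ideal condition to the residual (colon) ideals $(I:a)=\{x\in R:xa\in I\}$ and $(0:a)=\{x\in R:xa=0\}$, since these are the natural objects encoding the defining implication. I expect the listed conditions to be essentially: that $I\subseteq J(R)$ together with $(I:a)=I\cup(0:a)$ for every $a\in R\setminus J(R)$; that $(I:a)=I$ or $(I:a)=(0:a)$ for every such $a$; and possibly an ideal-theoretic version asserting that $0\neq AB\subseteq I$ with $A\not\subseteq J(R)$ forces $B\subseteq I$.

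First I would handle the equivalence between the definition and the colon-ideal equalities. For the forward direction, given a weakly $J$-ideal $I$ and $a\notin J(R)$, the inclusion $I\cup(0:a)\subseteq(I:a)$ is immediate, while any $x\in(I:a)$ satisfies $xa\in I$; if $xa\neq0$ the defining property yields $x\in I$, and otherwise $x\in(0:a)$, giving $(I:a)\subseteq I\cup(0:a)$. That $I\subseteq J(R)$ is already recorded in Proposition \ref{J(R)}. The passage from the union form $(I:a)=I\cup(0:a)$ to the dichotomy $(I:a)=I$ or $(I:a)=(0:a)$ rests on the elementary fact that an additive group can never be the union of two proper subgroups; since $I$ and $(0:a)$ are ideals contained in the ideal $(I:a)$, one of them must exhaust it. Converting the dichotomy back to the definition is routine: if $0\neq ab\in I$ with $a\notin J(R)$, then $b\in(I:a)$, and the alternative $(I:a)=(0:a)$ would force $ab=0$, a contradiction, so $(I:a)=I$ and $b\in I$.

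If the statement also contains an ideal-product formulation, I would prove it against the definition. The direction from the product condition to the definition is obtained by specializing to principal ideals $A=\langle a\rangle$ and $B=\langle b\rangle$. The reverse implication is where I expect the real difficulty: the hypothesis $0\neq AB\subseteq I$ does not guarantee $ab\neq0$ for individual generators, so a naive element-chase fails precisely when certain cross-products vanish. I would follow the case analysis familiar from weakly prime ideals: assuming $A\not\subseteq J(R)$ and $B\not\subseteq I$, choose $a\in A\setminus J(R)$ and $b\in B\setminus I$, then treat separately the cases $ab\neq0$, $aB=0$, $Ab=0$, and the residual case where $AB\neq0$ yet all such individual products vanish, in each case perturbing $a$ or $b$ by a suitable element of $A$ or $B$ to manufacture a genuinely nonzero product lying in $I$ while keeping the first factor outside $J(R)$, much as in the proof of Theorem \ref{IJ}. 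Reaching the contradiction in that last, most delicate case is the crux of the argument.
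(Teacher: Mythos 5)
Your proposal is correct in substance but takes a genuinely different route from the paper, so a comparison is in order. Where you overlap, the arguments coincide: your proof of the equivalence between the definition and $(I:a)=I\cup(0:a)$ for all $a\in R\setminus J(R)$ is exactly the paper's (1)$\Leftrightarrow$(2), and your derivation of the definition from the ideal-product condition by specializing to $K=\langle a\rangle$, $L=\langle b\rangle$ is the paper's (5)$\Rightarrow$(1). The real divergence is in the hard direction. The paper never proves (1)$\Rightarrow$(5) in one step: it interpolates two conditions you did not anticipate, namely (3) $(I:a)\subseteq J(R)\cup(0:a)$ for every $a\in R\setminus I$, and (4) $0\neq Ka\subseteq I$ implies $K\subseteq J(R)$ or $a\in I$, and it proves the cycle (1)$\Rightarrow$(3)$\Rightarrow$(4)$\Rightarrow$(5)$\Rightarrow$(1). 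What that decomposition buys is that the perturbation trick is only ever one-sided: in (4)$\Rightarrow$(5), once some $a\in L$ with $0\neq Ka\subseteq I$ forces $a\in I$, any $x\in L\setminus I$ must satisfy $Kx=0$, and then $0\neq K(a+x)\subseteq I$ finishes, the whole ideal $K$ being handled wholesale by (4). Your direct Anderson--Smith-style attack on (1)$\Rightarrow$(5) does work, and its first cases go exactly as you say, but the residual case ($ab=0$, $aB=0$, $Ab=0$, yet $AB\neq0$) costs more than the corresponding case of Theorem \ref{IJ}: taking $a'\in A$, $b'\in B$ with $a'b'\neq0$, you perturb both factors and must split on whether $a'\in J(R)$; and after concluding $b+b'\in I$ you are still not done, because $b'$ need not lie in $I$ (in Theorem \ref{IJ} the perturbing elements already lie in $I$, so subtraction is free). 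You need the additional observation that $0\neq(a+a')b'\in I$ (resp. $0\neq a'(b+b')\in I$ when $a'\notin J(R)$) forces $b'\in I$, and only then does $b\in I$ follow, giving the contradiction. This is a completable step, not a failing one, but your sketch leaves it implicit exactly where you admit the crux lies, so it should be written out.

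Two further notes. Your intermediate dichotomy ``$(I:a)=I$ or $(I:a)=(0:a)$'', justified by the fact that a group is never the union of two proper subgroups, is not on the paper's list but is correct, and the same subgroup-union fact is what the paper uses tacitly in (3)$\Rightarrow$(4), where from $(I:a)\subseteq J(R)\cup(0:a)$ and $(I:a)\neq(0:a)$ it concludes $(I:a)\subseteq J(R)$. Finally, since the theorem as printed does include conditions (3) and (4), a complete proof of the statement would still owe the (short) arguments tying those two conditions into your cycle.
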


\begin{enumerate}
\item $I$ is a weakly $J$-ideal of $R$.

\item $(I:a)=I\cup(0:a)$ for every $a\in R\backslash J(R)$.

\item $(I:a)\subseteq J(R)\cup(0:a)$ for every $a\in R\backslash I$.

\item If $a\in R$ and $K$ is an ideal of $R$ with $0\neq Ka\subseteq I$, then
$K\subseteq J(R)$ or $a\in I$.

\item If $K$ and $L$ are ideals of $R$ with $0\neq KL\subseteq I$, then
$K\subseteq J(R)$ or $L\subseteq I.$
\end{enumerate}

\begin{proof}
(1)$\Rightarrow$(2) Let $a\in R\backslash J(R)$. It is clear that
$I\cup(0:a)\subseteq(I:a)$. Let $x\in(I:a)$ so that $ax\in I$. If $ax\neq0$,
then $x\in I$ as $I$ is a weakly $J$-ideal. If $ax=0$, then $x\in(0:a)$. Thus,
$(I:a)\subseteq I\cup(0:a)$ and the equality holds.

(2)$\Rightarrow$(1) Let $a,b\in R$ such that $0\neq ab\in I$ and $a\notin
J(R)$. Then $b\notin(0:a)$ and so $b\in(I:a)\subseteq I$.

(1)$\Rightarrow$(3) Similar to the proof of (1)$\Rightarrow$(2).

(3)$\Rightarrow$(4) Suppose that $0\neq aK\subseteq I$ and $a\notin I$. Then
$(I:a)\neq(0:a)$ and so $K\subseteq(I:a)\subseteq J(R)$, as needed.

(4)$\Rightarrow$(5) Assume on the contrary that there are ideals $K$ and $L$
of $R$ such that $0\neq KL\subseteq I$ but $K\nsubseteq J(R)$ and $L\nsubseteq
I$. Since $KL\neq0$, there exists $a\in L$ such that $0\neq Ka\subseteq I$.
Since $K\nsubseteq J(R)$, we have $a\in I$ by (4). Now, choose an element
$x\in L\backslash I$. Similar to the previous argument, we conclude $Kx=0$.
(Indeed, if $Kx\neq0$, then $x\in I$). Hence, $0\neq K(a+x)\subseteq I$ and
$K\nsubseteq J(R)$ imply that $(a+x)\in I$. Thus, $x\in I$, a contradiction.

(5)$\Rightarrow$(1) Let $a,b\in R$ with $0\neq ab\in I$. Write $K=<a>$ and
$L=\left\langle b\right\rangle $. Then the result follows directly by (5).
\end{proof}

\begin{proposition}
\label{Quotient}Let $S$ be a non-empty subset of $R$. If $I$ and $(0:S)$ are
weakly $J$-ideals of $R$ where $S\nsubseteq I$, then so is $(I:S)$.
\end{proposition}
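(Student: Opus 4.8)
The plan is to verify directly that $(I:S)$ satisfies the definition of a weakly $J$-ideal. First I would record that $(I:S)=\{x\in R: xS\subseteq I\}$ is always an ideal, and that it is proper precisely because $S\nsubseteq I$: if $1\in(I:S)$ then $s=1\cdot s\in I$ for every $s\in S$, forcing $S\subseteq I$. So the hypothesis $S\nsubseteq I$ is exactly what guarantees properness. It also helps to note at the outset that $(0:S)\subseteq(I:S)$ since $0\in I$.

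Now suppose $a,b\in R$ with $0\neq ab\in(I:S)$ and $a\notin J(R)$; the goal is to show $b\in(I:S)$, that is, $bs\in I$ for every $s\in S$. The key dichotomy is whether or not $ab\in(0:S)$. If $ab\in(0:S)$, then $0\neq ab\in(0:S)$ together with $a\notin J(R)$ and the hypothesis that $(0:S)$ is a weakly $J$-ideal yields $b\in(0:S)\subseteq(I:S)$, and we are done. This is the only place where the assumption on $(0:S)$ is used.

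The remaining case is $ab\notin(0:S)$, where I would use only that $I$ is a weakly $J$-ideal. Here there is some $s_{0}\in S$ with $abs_{0}\neq0$; since $ab\in(I:S)$ gives $abs_{0}\in I$, applying the weakly $J$-ideal property of $I$ to $0\neq a(bs_{0})\in I$ produces $bs_{0}\in I$. Then, for an arbitrary $s\in S$, I would argue $bs\in I$ as follows: $abs\in I$ always holds, so if $abs\neq0$ the weakly $J$-ideal property of $I$ gives $bs\in I$ immediately; and if $abs=0$, I would consider $ab(s+s_{0})=abs+abs_{0}=abs_{0}\neq0$, which lies in $I$, whence $0\neq a\bigl(b(s+s_{0})\bigr)\in I$ forces $b(s+s_{0})=bs+bs_{0}\in I$, and subtracting $bs_{0}\in I$ gives $bs\in I$. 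I expect the subcase $abs=0$ to be the main obstacle, and the point to watch is that $s+s_{0}$ need not lie in $S$; this causes no difficulty because the additive manipulation takes place in the ambient ring $R$ and only the membership $ab(s+s_{0})\in I$, not $s+s_{0}\in S$, is needed. Combining the two cases gives $bs\in I$ for all $s\in S$, i.e. $b\in(I:S)$, which completes the proof.
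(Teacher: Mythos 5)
Your proof is correct, and it follows the same outline as the paper's: properness of $(I:S)$ from $S\nsubseteq I$, then the dichotomy according to whether $ab$ annihilates $S$ (your case $ab\in(0:S)$ is the paper's case $abS=0$, handled identically via the hypothesis on $(0:S)$). The difference is in the other case: where you argue elementwise, the paper simply asserts that $abS\neq 0$ forces $bS\subseteq I$ ``as $I$ is weakly $J$-ideal,'' which is not literally an application of the definition, since for an individual $s\in S$ with $abs=0$ the definition gives nothing. That assertion is really an instance of condition (5) of Theorem \ref{eq} (take $K=\left\langle a\right\rangle$ and $L$ the ideal generated by $bS$), and the paper's own proof of (4)$\Rightarrow$(5) there uses exactly the translation trick you employ: replace $s$ by $s+s_{0}$ where $abs_{0}\neq0$, note that $ab(s+s_{0})=abs_{0}\neq0$ lies in $I$, conclude $b(s+s_{0})\in I$, and subtract $bs_{0}\in I$. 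So your proof is the same argument with the omitted step written out in full; your remark that $s+s_{0}$ need not lie in $S$, and that this is harmless because only membership of $ab(s+s_{0})$ in $I$ is needed, is precisely the point the paper's terse wording glosses over.
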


\begin{proof}
We first note that $(I:S)$ is proper in $R$ since otherwise, $S\subseteq I$, a
contradiction. Let $0\neq ab\in(I:S)$ and $a\notin J(R).$ If $abS\neq0$, then
$bS\subseteq I$ and so $b\in(I:S)$ as $I$ is weakly $J$-ideal. If $abS=0$,
then $0\neq ab\in(0:S)$ which implies $b\in(0:S)$ as $(0:S)$ is a weakly
$J$-ideal of $R$. Thus, again $b\in(I:S)$ as required.
\end{proof}

Recall that a proper ideal $P$ of a ring $R$ is called weakly prime if
whenever $a,b\in R$ such that $0\neq ab\in P$, then $a\in P$ or $b\in P$. In
general, weakly $J$-ideals and weakly prime ideals are not comparable. For
example, any non-zero prime ideal in the domain of integers is weakly prime
that is not a weakly $J$-ideal. On the other hand, the ideal $\left\langle
16\right\rangle $ is a weakly $J$-ideal in the ring $%
\mathbb{Z}
_{32}$ which is clearly not weakly prime. However, for ideals contained in the
Jacobson radical, we clarify in the following proposition that weakly prime
ideals are weakly J-ideals. The proof is straightforward.

\begin{proposition}
\label{wp1}If $I$ is a weakly prime ideal of a ring $R$ and $I\subseteq J(R)$,
then $I$ is a weakly $J$-ideal.
\end{proposition}

The converse of the previous proposition holds under certain conditions:

\begin{corollary}
\label{wp2}Let $I$ be an ideal of a ring $R$. Suppose $I$ is maximal with
respect to the property: $I$ and $(0:a)$ are weakly $J$-ideals for all
$a\notin I$. Then $I$ is weakly prime in $R$.
\end{corollary}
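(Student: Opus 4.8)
The plan is to argue by contradiction, leaning on Proposition~\ref{Quotient} to manufacture an ideal strictly larger than $I$ that still satisfies the maximized property. Suppose $I$ were not weakly prime; then there would exist $a,b\in R$ with $0\neq ab\in I$ but $a\notin I$ and $b\notin I$. The strategy is to show that $(I:a)$ is such a larger ideal, thereby contradicting the maximality of $I$.

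First I would produce $(I:a)$ as a weakly $J$-ideal. Since $a\notin I$, the maximized property supplies that both $I$ and $(0:a)$ are weakly $J$-ideals, and $a\notin I$ is exactly the condition $\{a\}\nsubseteq I$; hence Proposition~\ref{Quotient}, applied with $S=\{a\}$, shows that $(I:a)$ is a weakly $J$-ideal. I would then record the strict containment $I\subsetneq(I:a)$: the inclusion $I\subseteq(I:a)$ is automatic, while $ab\in I$ gives $b\in(I:a)$, and $b\notin I$ by assumption.

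The crux is to check that $(I:a)$ inherits the \emph{entire} property for which $I$ is maximal, not merely the fact of being a weakly $J$-ideal. That property is the conjunction ``$(I:a)$ is a weakly $J$-ideal and $(0:c)$ is a weakly $J$-ideal for every $c\notin(I:a)$.'' The first clause has just been established. For the second I would exploit the inclusion $I\subseteq(I:a)$: any $c\notin(I:a)$ necessarily satisfies $c\notin I$, so the hypothesis on $I$ immediately yields that $(0:c)$ is a weakly $J$-ideal. Consequently $(I:a)$ satisfies the property and properly contains $I$, which is the desired contradiction; hence $I$ is weakly prime.

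I expect the main obstacle to be precisely this last verification: because the maximized property is a conjunction involving the ideal itself together with a uniform condition on the annihilators $(0:c)$ of all elements outside it, one must confirm that both clauses survive the passage from $I$ to $(I:a)$, and it is the inclusion $I\subseteq(I:a)$ that makes the annihilator clause transfer. I would also note that the hypothesis $ab\neq0$ plays no role beyond matching the definition of weakly prime—the same contradiction arises from $ab\in I$ alone—so the argument in fact shows that $I$ is prime.
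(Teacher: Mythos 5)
Your proof is correct and follows essentially the same route as the paper's: apply Proposition \ref{Quotient} with $S=\{a\}$, observe that every $c\notin(I:a)$ also lies outside $I$ so the annihilator clause of the maximized property transfers to $(I:a)$, and invoke maximality (the paper concludes $(I:a)=I$ directly, whereas you phrase it as a contradiction via $I\subsetneq(I:a)$—the same argument). Your closing observation that the hypothesis $ab\neq 0$ is never used, so the argument in fact shows $I$ is prime, is also accurate; the paper's own proof silently establishes this stronger conclusion as well.
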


\begin{proof}
Let $a,b\in R$ such that $0\neq ab\in I$ and $a\notin I$. If we choose
$S=\left\{  a\right\}  $ in Proposition \ref{Quotient}, then we conclude that
$(I:a)$ is a weakly $J$-ideal of $R$. Moreover, clearly $c\notin I$ for all
$c\notin(I:a)$ and so $(0:c)$ is a weakly $J$-ideals. By maximality of $I$, we
must have $b\in(I:a)=I$ as required.
\end{proof}

\begin{proposition}
\label{int}If $\left\{  I_{i}:i\in\Delta\right\}  $ is a non empty family of
weakly $J$-ideals of a ring $R$, then $\bigcap\limits_{i\in\Delta}$ $I_{i}$ is
a weakly $J$-ideal.
\end{proposition}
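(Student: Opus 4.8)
The plan is to set $I=\bigcap_{i\in\Delta}I_{i}$ and verify the two requirements of the definition directly: that $I$ is proper, and that the defining implication holds. Properness is immediate; since $\Delta$ is non-empty, fix any $i_{0}\in\Delta$, so that $I\subseteq I_{i_{0}}\subsetneq R$, whence $1\notin I$ and $I$ is a proper ideal. (Alternatively, by Proposition \ref{J(R)} each $I_{i}\subseteq J(R)$, so $I\subseteq J(R)\subsetneq R$.)

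For the main implication, suppose $a,b\in R$ satisfy $0\neq ab\in I$ and $a\notin J(R)$. The first key observation is that both hypotheses transfer \emph{verbatim} to every member of the family: since $ab\in I=\bigcap_{i}I_{i}$, we have $ab\in I_{i}$ for each $i\in\Delta$, and the inequality $ab\neq0$ is a single global statement that does not depend on $i$. Thus for every fixed $i$ we have simultaneously $0\neq ab\in I_{i}$ and $a\notin J(R)$.

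Next I would apply the weakly $J$-ideal hypothesis to each $I_{i}$ separately: it yields $b\in I_{i}$ for every $i\in\Delta$. Since this holds for all indices, $b\in\bigcap_{i}I_{i}=I$, which is exactly the conclusion the definition demands. Hence $I$ is a weakly $J$-ideal.

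I do not expect a genuine obstacle here; the argument is a routine index-chase. The one conceptual point worth flagging is \emph{why} it succeeds for weakly $J$-ideals while the analogous statement fails for weakly prime ideals (as the excerpt notes in the discussion surrounding Propositions \ref{wp1} and \ref{int}). The difference is that the conclusion of the weakly $J$-ideal condition is the single assertion ``$b\in I_{i}$'' rather than a disjunction; consequently the same element $b$ lands in every $I_{i}$ at once and can be collected into the intersection. For weakly prime ideals the conclusion ``$a\in I_{i}$ or $b\in I_{i}$'' may be witnessed by different alternatives for different $i$, so it need not survive intersection. This is the sole place where I would take care to invoke the precise form of the definition rather than a superficial analogy.
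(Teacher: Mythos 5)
Your proof is correct and follows essentially the same route as the paper's: both fix $a,b$ with $0\neq ab\in\bigcap_{i\in\Delta}I_{i}$ and $a\notin J(R)$, apply the weakly $J$-ideal property of each $I_{i}$ to get $b\in I_{i}$ for every $i$, and conclude $b$ lies in the intersection. Your explicit check of properness and the closing remark contrasting this with weakly prime ideals are fine additions but do not change the argument.
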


\begin{proof}
Let $a,b\in R$ such that $0\neq ab\in\bigcap\limits_{i\in\Delta}$ $I_{i}$ and
$a\notin J(R)$. Since for all $i\in\Delta$, $I_{i}$ is a weakly $J$-ideal of
$R$, we have $b\in I_{i}$. Hence, $b\in\bigcap\limits_{i\in\Delta}$ $I_{i}$
and the result follows.
\end{proof}

In general, the converse of Proposition \ref{int} is not true. For example,
while $\left\langle \bar{0}\right\rangle =\left\langle \bar{2}\right\rangle
\cap\left\langle \bar{3}\right\rangle $ is a weakly $J$-ideal of $%
\mathbb{Z}
_{6}$, non of the ideals $\left\langle \bar{2}\right\rangle $ and
$\left\langle \bar{3}\right\rangle $ are (weakly) $J$-ideals.

Next, we characterize weakly $J$-ideals of a Cartesian product of two rings.

\begin{proposition}
\label{cart}Let $R=R_{1}\times R_{2}$ be a decomposable ring and $I$ be a
non-zero proper ideal of $R$. Then the following statements are equivalent.

\begin{enumerate}
\item $I$ is a weakly $J$-ideal of $R.$

\item $I=$ $I_{1}\times R_{2}$ where $I_{1}$ is a $J$-ideal of $R_{1}$ or $I=$
$R_{1}\times I_{2}$ where $I_{2}$ is a $J$-ideal of $R_{2}.$

\item $I$ is a $J$-ideal of $R.$
\end{enumerate}
\end{proposition}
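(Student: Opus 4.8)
The plan is to prove the chain of implications $(2)\Rightarrow(3)\Rightarrow(1)\Rightarrow(2)$, exploiting the key structural fact from Theorem \ref{IJ}: a weakly $J$-ideal that is not a $J$-ideal squares to zero. Since $I$ is assumed nonzero, I expect the nilpotency obstruction $I^2=0$ to be incompatible with the product structure, forcing $I$ to actually be a genuine $J$-ideal — which is exactly why statement (3) gets folded into the equivalence.

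Let me sketch the directions. For $(2)\Rightarrow(3)$, I would invoke the corresponding characterization of $J$-ideals in a product ring from \cite{Hani}: an ideal of $R_1\times R_2$ of the form $I_1\times R_2$ is a $J$-ideal of $R$ precisely when $I_1$ is a $J$-ideal of $R_1$ (and symmetrically), since $J(R)=J(R_1)\times J(R_2)$. This should be essentially a citation or a short direct check. The implication $(3)\Rightarrow(1)$ is immediate, as every $J$-ideal is a weakly $J$-ideal by definition.

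The substantive direction is $(1)\Rightarrow(2)$, and here is where I would put the real work. Suppose $I$ is a nonzero weakly $J$-ideal of $R=R_1\times R_2$. The main obstacle is ruling out the ``mixed'' possibilities, namely that $I=I_1\times I_2$ with both factors proper, or that the relevant factor fails to be all of $R_2$ (resp.\ $R_1$). My strategy is to first argue that $I$ must in fact be a $J$-ideal, not merely a weakly $J$-ideal: if it were weakly but not a $J$-ideal, Theorem \ref{IJ} gives $I^2=0$, hence $I\subseteq N(R)=N(R_1)\times N(R_2)$ by Corollary \ref{squareI}(1); I would then produce a nonzero product landing in $I$ that violates the weakly $J$-ideal condition, using an idempotent like $(1,0)$ or $(0,1)$ to generate the needed witness, contradicting $I\neq 0$. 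Once $I$ is known to be a $J$-ideal, I would use that no $J$-ideal of $R_1\times R_2$ can have both components proper — testing the orthogonal idempotents $e_1=(1,0)$ and $e_2=(0,1)$, note $e_1 e_2=0\in I$ while neither idempotent lies in $J(R)$, and trace the consequences to force one component to be the whole ring.

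Concretely, writing $I=I_1\times I_2$, I would show $I_2=R_2$ or $I_1=R_1$ as follows: if $I_1\neq R_1$, pick $a\in R_1\setminus I_1$; then for the element $(a,1)\notin J(R)$ (since its second coordinate is a unit), multiplying by suitable elements of $I$ and invoking the $J$-ideal property should force $I_2=R_2$, whence $I=I_1\times R_2$ with $I_1$ a $J$-ideal of $R_1$. The symmetric argument handles the other case, completing $(1)\Rightarrow(2)$. The cleanest route is probably to first establish that $I$ is a $J$-ideal (the heart of the matter, via Theorem \ref{IJ} and the nonvanishing hypothesis) and then apply the known $J$-ideal product decomposition as a black box, so that most of the effort concentrates on the single nilpotency contradiction rather than on re-deriving the product structure from scratch.
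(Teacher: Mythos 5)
The fatal gap is the black box you lean on twice. The ``characterization of $J$-ideals in a product ring'' that you want to cite for (2)$\Rightarrow$(3) --- that $I_{1}\times R_{2}$ is a $J$-ideal of $R_{1}\times R_{2}$ whenever $I_{1}$ is a $J$-ideal of $R_{1}$ --- is not in \cite{Hani}, and no such statement can exist anywhere, because it is false. By Proposition \ref{J(R)} every weakly $J$-ideal (hence every $J$-ideal) of $R$ is contained in $J(R)=J(R_{1})\times J(R_{2})$, while $I_{1}\times R_{2}$ contains $(0,1)\notin J(R)$; so an ideal with a full component is never a ($J$- or weakly $J$-)ideal of $R$. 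Concretely, take $R=\mathbb{Z}_{4}\times\mathbb{Z}_{2}$ and $I=\left\langle 2\right\rangle \times\mathbb{Z}_{2}$: condition (2) holds, since every proper ideal of the quasi-local ring $\mathbb{Z}_{4}$ is a $J$-ideal by Theorem \ref{quasi}, yet $(0,1)(1,0)=(0,0)\in I$ with $(0,1)\notin J(R)$ and $(1,0)\notin I$ shows $I$ is not a $J$-ideal, and $I\nsubseteq J(R)$ shows it is not even a weakly $J$-ideal. So (2)$\Rightarrow$(3) cannot be recovered by citation or by any other argument; for comparison, the paper proves this direction by the direct check ``$(a,b)\notin J(R)$, so clearly $a\notin J(R_{1})$,'' and that assertion is precisely what the example defeats, since $(a,b)\notin J(R)$ is compatible with $a\in J(R_{1})$ and $b\notin J(R_{2})$.

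Your sketch of (1)$\Rightarrow$(2) also misfires in two places. First, orthogonal idempotents are unusable witnesses under the weakly $J$-hypothesis: $e_{1}e_{2}=0$, and the defining implication applies only to \emph{nonzero} products. The paper's witnesses are instead the unit-padded elements: from $0\neq(a,b)\in I$ one writes $0\neq(a,1)(1,b)\in I$ with $(a,1)\notin J(R)$, and this is what rules out both components being proper. Second, once the $J$-ideal property is in hand, ``tracing the consequences'' of the idempotents gives $e_{2}\in I$ and, by symmetry, $e_{1}\in I$, hence $1=e_{1}+e_{2}\in I$; that is, the idempotents force $I=R$, not ``one component is the whole ring.'' Combined with Proposition \ref{J(R)}, this shows a decomposable ring has no nonzero weakly $J$-ideals at all, so your first phase, done correctly, makes (1)$\Rightarrow$(2) vacuous rather than producing a decomposition, and it exposes that the irreparable defect of the equivalence sits in (2)$\Rightarrow$(3). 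Finally, even granting the shape $I=I_{1}\times R_{2}$, your outline never verifies that $I_{1}$ is a $J$-ideal of $R_{1}$ --- the paper does this explicitly with the pairs $(a,0),(b,0)$ --- and the nonexistent black box was what you intended to supply that step.
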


\begin{proof}
(1)$\Rightarrow$(2) Let $I=I_{1}\times I_{2}$ be a non-zero weakly $J$-ideal
of $R$. Assume $I_{1}$ and $I_{2}$ are proper in $R_{1}$ and $R_{2}$,
respectively and choose $0\neq(a,b)\in I$. Then $0\neq(a,1)(1,b)\in I$ and
neither $(a,1)\in J(R)$ nor $(1,b)\in I$, a contradiction. We may assume with
no loss of generality that $I_{1}\neq R_{1}$ and $I_{2}=R_{2}.$ Since clearly
$I^{2}\neq0$, $I$ is a $J$-ideal of $R$ by Corollary \ref{squareI} (2). Let
$a,b\in R_{1}$ such that $ab\in I_{1}$ and $a\notin J(R_{1})$. Then
$(a,0)(b,0)\in I$ and $(a,0)\notin J(R)$ imply that $(b,0)\in I$ and so $b\in
I_{1}$ as required.

(2)$\Rightarrow$(3) We may assume that $I=$ $I_{1}\times R_{2}$ where $I_{1}$
is a $J$-ideal of $R_{1}$. Suppose that $(a,b)(c,d)\in I$ and $(a,b)\notin
J(R).$ Then clearly $a\notin J(R_{1})$ and $ac\in I_{1}$ which imply $c\in
I_{1}$. Thus, $(c,d)\in I$ and we are done.

(3)$\Rightarrow$(1) straightforward.
\end{proof}

\begin{corollary}
Let $R=R_{1}\times R_{2}$ be a decomposable ring. If $I$ is a weakly $J$-ideal
of $R$ that is not a $J$-ideal, then $I=0$.
\end{corollary}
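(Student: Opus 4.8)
The plan is to derive this corollary directly from Proposition \ref{cart} by taking its contrapositive. Proposition \ref{cart} asserts that for a \emph{non-zero proper} ideal $I$ of the decomposable ring $R = R_1 \times R_2$, statements (1) and (3) are equivalent, i.e.\ $I$ is a weakly $J$-ideal if and only if $I$ is a $J$-ideal. Since the corollary hypothesizes that $I$ is a weakly $J$-ideal which fails to be a $J$-ideal, the equivalence must be broken, and the only way this can happen is if $I$ falls outside the scope of the proposition, namely if $I$ is not a non-zero proper ideal.

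First I would argue by contradiction: suppose $I \neq 0$. Because $I$ is a weakly $J$-ideal, it is by definition a proper ideal of $R$, so $I \neq R$. Thus $I$ is a non-zero proper ideal of $R$ that is a weakly $J$-ideal. Applying the implication (1)$\Rightarrow$(3) of Proposition \ref{cart}, we conclude that $I$ is a $J$-ideal of $R$, which directly contradicts the assumption that $I$ is not a $J$-ideal. Hence $I = 0$, as claimed.

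I do not anticipate a genuine obstacle here, since the corollary is essentially a restatement of the equivalence (1)$\Leftrightarrow$(3) established in the proposition. The one point that deserves a moment's care is confirming that $I$ is proper: this is guaranteed simply by the definition of a weakly $J$-ideal (a weakly $J$-ideal is required to be a proper ideal), which lets us rule out $I = R$ and secure that $I$ lies within the hypotheses of Proposition \ref{cart}. With that observation in place, the argument is immediate.
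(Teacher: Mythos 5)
Your proof is correct and is exactly the intended argument: the paper states this corollary without proof precisely because it is the immediate contrapositive reading of Proposition \ref{cart}, whose equivalence (1)$\Leftrightarrow$(3) applies to every non-zero proper ideal of $R_{1}\times R_{2}$. Your added remark that properness of $I$ is automatic from the definition of a weakly $J$-ideal is the only point needing care, and you handled it correctly.
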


Let $I$ be a proper ideal of $R.$ In the following, the notation $Z_{I}%
(R)$\ denotes the set of $\{r\in R|rs\in I$ for some $s\in R\backslash I\}.$

\begin{proposition}
\label{S}Let $S$ be a multiplicatively closed subset of a ring $R$ such that
$J(S^{-1}R)=S^{-1}J(R)$.
\end{proposition}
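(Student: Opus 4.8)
The plan is to move the weakly $J$-ideal condition back and forth between $R$ and $S^{-1}R$ by clearing denominators, relying on two ingredients: the standard order-preserving correspondence of ideals under localization, and the hypothesis $J(S^{-1}R)=S^{-1}J(R)$, which is exactly what converts ``$a\notin J(R)$'' into ``$\tfrac{a}{s}\notin J(S^{-1}R)$'' and back. The one feature distinguishing this from the ordinary $J$-ideal localization is the care needed with the ``$\neq 0$'' hypothesis, since $\tfrac{x}{s}=0$ in $S^{-1}R$ only forces $ux=0$ for some $u\in S$, not $x=0$ in $R$; I expect the disjointness hypothesis on $Z_I(R)$ introduced just above to be precisely what controls this, and properness to be guaranteed by $S\cap I=\varnothing$.

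For the forward implication, that a weakly $J$-ideal $I$ of $R$ with $S\cap I=\varnothing$ has $S^{-1}I$ a weakly $J$-ideal of $S^{-1}R$, I would first note $S^{-1}I$ is proper exactly because $S\cap I=\varnothing$. Then take $\tfrac{a}{s},\tfrac{b}{t}$ with $0\neq\tfrac{a}{s}\tfrac{b}{t}\in S^{-1}I$ and $\tfrac{a}{s}\notin J(S^{-1}R)=S^{-1}J(R)$; the latter forces $a\notin J(R)$ at once, since $a\in J(R)$ would put $\tfrac{a}{s}$ in $S^{-1}J(R)$. Clearing denominators yields $u\in S$ with $uab\in I$, while $\tfrac{ab}{st}\neq 0$ guarantees $wab\neq 0$ for every $w\in S$, so in particular $0\neq a(ub)\in I$. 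Since $a\notin J(R)$ and $I$ is a weakly $J$-ideal, $ub\in I$, hence $\tfrac{b}{t}=\tfrac{ub}{ut}\in S^{-1}I$. Notice this direction needs no hypothesis on $Z_I(R)$.

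For the expected converse, assuming $S^{-1}I$ is a weakly $J$-ideal and $S\cap Z_I(R)=\varnothing$, I would first record the standard consequence that the contraction satisfies $S^{-1}I\cap R=I$: if $sr\in I$ with $s\in S$ and $r\notin I$, then $s\in Z_I(R)$, contrary to hypothesis. Next take $a,b\in R$ with $0\neq ab\in I$ and $a\notin J(R)$, and pass to $\tfrac{a}{1},\tfrac{b}{1}$. The goal is to check that $\tfrac{a}{1}\notin J(S^{-1}R)=S^{-1}J(R)$ and that $\tfrac{ab}{1}\neq 0$; granting these, the weakly $J$-ideal property of $S^{-1}I$ applied to $\tfrac{a}{1}\tfrac{b}{1}=\tfrac{ab}{1}$ gives $\tfrac{b}{1}\in S^{-1}I$, whence $b\in I$ by the contraction step.

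The steps I expect to be the main obstacle are exactly those two transfers in the converse, where passing from $R$ to $S^{-1}R$ can destroy information: $\tfrac{ab}{1}=0$ would only say $wab=0$ for some $w\in S$ (and then $(wa)b=0\in I$ with $b\notin I$ must be fed back into the $Z_I(R)$ condition), and $\tfrac{a}{1}\in S^{-1}J(R)$ would only say $s'a\in J(R)$ for some $s'\in S$, not $a\in J(R)$. Verifying that the disjointness hypotheses genuinely rule these out is the crux of the argument. As a possible structural shortcut, Theorem \ref{IJ} shows a weakly $J$-ideal that is not a $J$-ideal satisfies $I^{2}=0$, so one could split into the genuine $J$-ideal case, handled by the corresponding $J$-ideal localization statement, and the degenerate case $I^{2}=0$; but the uniform fraction computation above seems cleaner and I would pursue it first.
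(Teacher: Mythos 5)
Your part (1) is correct and is essentially the paper's own argument: properness of $S^{-1}I$ from $S\cap I=\emptyset$, clearing denominators to get $uab\in I$ with $uab\neq 0$ (since $\frac{ab}{st}\neq 0$), transferring $\frac{a}{s}\notin J(S^{-1}R)=S^{-1}J(R)$ to $a\notin J(R)$, and concluding $ub\in I$, hence $\frac{b}{t}=\frac{ub}{ut}\in S^{-1}I$. The skeleton of your converse (pass to $\frac{a}{1},\frac{b}{1}$, apply the weakly $J$-ideal property of $S^{-1}I$, contract back to $R$) also matches the paper, and your contraction step $S^{-1}I\cap R=I$ via $S\cap Z_{I}(R)=\emptyset$ is exactly right.

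However, the converse as written has a genuine gap, located precisely at the two points you yourself call ``the crux'' and then leave unverified; worse, the one concrete mechanism you propose for the harder of them is wrong. To rule out $\frac{ab}{1}=0$, you suggest that from $wab=0$ ($w\in S$) one should feed ``$(wa)b=0\in I$ with $b\notin I$'' into the $Z_{I}(R)$ condition. That argument shows $wa\in Z_{I}(R)$, but $wa$ need not belong to $S$, so disjointness of $S$ from $Z_{I}(R)$ produces no contradiction; and $w$ itself cannot be placed in $Z_{I}(R)$ this way, since $w(ab)=0\in I$ has $ab\in I$. The hypothesis that actually does the job -- and the one the paper uses -- is $S\cap Z(R)=\emptyset$, which you never invoke: $wab=0$ with $ab\neq 0$ makes $w$ a zero divisor lying in $S$. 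Note that this step cannot be argued inside $S^{-1}R$ at all, because when the product $\frac{a}{1}\frac{b}{1}$ is zero the weakly $J$-ideal property of $S^{-1}I$ says nothing. Similarly, to convert $\frac{a}{1}\in S^{-1}J(R)$ (i.e., $s'a\in J(R)$ for some $s'\in S$) back into $a\in J(R)$, the needed hypothesis is $S\cap Z_{J(R)}(R)=\emptyset$: if $a\notin J(R)$ then $s'\in Z_{J(R)}(R)\cap S$, a contradiction. You flag this transfer as a problem but never name or use this hypothesis -- indeed your converse is stated assuming only $S\cap Z_{I}(R)=\emptyset$, under which the statement cannot be proved. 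Both missing verifications are one-liners, but they (together with identifying which disjointness condition handles which transfer) are the entire mathematical content of part (2) beyond the formal reduction, so the proposal stops short of a proof.
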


\begin{enumerate}
\item If $I$ is a weakly $J$-ideal of $R$ such that $I\cap S=\emptyset$, then
$S^{-1}I$ is a weakly $J$-ideal of $S^{-1}R$.

\item If $S^{-1}I$ is a weakly $J$-ideal of $S^{-1}R$ and $S\cap Z(R)=S\cap
Z_{I}(R)=S\cap Z_{J(R)}(R)=\emptyset$, then $I$ is a weakly $J$-ideal of $R$.
\end{enumerate}

\begin{proof}
(1)\ Let $0\neq\frac{a}{s_{1}}\frac{b}{s_{2}}\in S^{-1}I$ and $\frac{a}{s_{1}%
}\notin J(S^{-1}R)$ for some $\frac{a}{s_{1}},\frac{b}{s_{2}}\in S^{-1}R.$
Then $0\neq uab\in I$ for some $u\in S$. Since clearly $a\notin J(R)$ and $I$
is a weakly $J$-ideal, we have $ub\in I$. Hence $\frac{b}{s_{2}}=\frac
{ub}{us_{2}}\in S^{-1}I$, as needed.

(2) Let $a,b\in R$ and $0\neq ab\in I$. Then $\frac{a}{1}\frac{b}{1}\in
S^{-1}I$. If $\frac{a}{1}\frac{b}{1}=0$, then $uab=0$ for some $u\in S.$ Since
$S\cap Z(R)=\emptyset$, we have $ab=0$, a contradiction. Since $S^{-1}I$ is a
weakly $J$-ideal of $S^{-1}R$, $0\neq\frac{a}{1}\frac{b}{1}\in S^{-1}I$
implies either $\frac{a}{1}\in J(S^{-1}R)=S^{-1}J(R)$ or $\frac{b}{1}\in
S^{-1}I$. If $\frac{a}{1}\in S^{-1}J(R)$, then there exists $u\in S$ with
$ua\in J(R)$. Since $S\cap Z_{J(R)}(R)=\emptyset$, then $a\in J(R)$. If
$\frac{b}{1}\in S^{-1}I$, then there exists $v\in S$ with $vb\in I$ and so
$b\in I$ as $S\cap Z_{I}(R)=\emptyset$. Therefore, $I$ is a weakly $J$-ideal
of $R$.
\end{proof}

\begin{proposition}
\label{f}Let $f:R_{1}\rightarrow R_{2}$ be a ring homomorphism. Then the
following statements hold.
\end{proposition}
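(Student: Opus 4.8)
The plan is to read the statement as the usual pair of transfer results for this kind of ideal: one pushing a weakly $J$-ideal forward along a surjection and one pulling a weakly $J$-ideal back along $f$. Both arguments rest on controlling how $f$ interacts with the two defining ingredients of a weakly $J$-ideal, namely membership in the Jacobson radical and the nonvanishing of the product $ab$. For the radical I would first record that when $f$ is surjective with $\ker f \subseteq J(R_1)$ one has $f^{-1}(J(R_2)) = J(R_1)$, and hence $f(J(R_1)) = J(R_2)$, because the maximal ideals of $R_2$ correspond precisely to the maximal ideals of $R_1$ containing $\ker f$, which under this hypothesis is all of them. In the pull-back direction I would instead invoke the hypothesis that $f^{-1}(J(R_2)) = J(R_1)$ (automatic when $f$ is a monomorphism with matching radicals) to guarantee that $a \notin J(R_1)$ forces $f(a) \notin J(R_2)$.

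For the forward direction, suppose $I_1$ is a weakly $J$-ideal of $R_1$ with $\ker f \subseteq I_1$ and take $x, y \in R_2$ with $0 \neq xy \in f(I_1)$ and $x \notin J(R_2)$. Using surjectivity I write $x = f(a)$ and $y = f(b)$; then $f(ab) = xy \in f(I_1)$, and since $\ker f \subseteq I_1$ this yields $ab \in I_1$. The point needing care is reproducing the strict inequality: from $xy = f(ab) \neq 0$ I get $ab \notin \ker f$, so in particular $ab \neq 0$, giving $0 \neq ab \in I_1$. Finally $x = f(a) \notin J(R_2) = f(J(R_1))$ forces $a \notin J(R_1)$, so the weakly $J$-ideal property of $I_1$ delivers $b \in I_1$ and therefore $y = f(b) \in f(I_1)$. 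Properness of $f(I_1)$ follows from properness of $I_1$ together with $\ker f \subseteq I_1$.

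For the pull-back direction, let $I_2$ be a weakly $J$-ideal of $R_2$ and take $a, b \in R_1$ with $0 \neq ab \in f^{-1}(I_2)$ and $a \notin J(R_1)$. Applying $f$ gives $f(a)f(b) = f(ab) \in I_2$ and $f(a) \notin J(R_2)$ by the radical hypothesis above. Here the delicate step is again the nonvanishing: to conclude $0 \neq f(ab)$ from $0 \neq ab$ I must use injectivity of $f$ (or the relevant kernel hypothesis), since in general a homomorphism can collapse a nonzero product to zero and then the weakly-$J$ hypothesis on $I_2$ no longer applies. Granting $f(ab) \neq 0$, the weakly $J$-ideal property of $I_2$ yields $f(b) \in I_2$, i.e. $b \in f^{-1}(I_2)$, and the proper containment $f^{-1}(I_2) \neq R_1$ is immediate.

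The main obstacle throughout is precisely the interaction of the homomorphism with the clause $0 \neq ab$ that distinguishes weakly $J$-ideals from ordinary $J$-ideals: unlike the $J$-ideal transfer results, one cannot ignore the possibility that $f$ sends a nonzero product to zero, so each direction must explicitly invoke the kernel or injectivity hypothesis to preserve nonvanishing, and must separately verify the radical correspondence to preserve the condition $a \notin J$. I expect the bookkeeping of these two hypotheses, rather than any single hard computation, to be where the argument really has to be watched.
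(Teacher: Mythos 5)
Your reconstruction of the two hidden items matches the paper's: its part (2) pushes a weakly $J$-ideal forward along an epimorphism with $Ker(f)\subseteq I_{1}$, and its part (1) pulls one back along a monomorphism. For the forward direction your argument is essentially the paper's proof: write $x=f(a)$, $y=f(b)$, use $Ker(f)\subseteq I_{1}$ to get $0\neq ab\in I_{1}$, then transfer the radical condition. The only cosmetic difference is that the paper invokes just the inclusion $f(J(R_{1}))\subseteq J(R_{2})$, which holds for any epimorphism (it cites \cite{Hani} for this), whereas you derive the full equality $f(J(R_{1}))=J(R_{2})$ from $Ker(f)\subseteq J(R_{1})$; that is legitimate here, since $Ker(f)\subseteq I_{1}\subseteq J(R_{1})$ by Proposition \ref{J(R)}, but only the one inclusion is actually needed.

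The pull-back direction is where you genuinely depart from the paper, and your caution is vindicated. The paper asserts, for an arbitrary monomorphism, that $a\notin J(R_{1})$ forces $f(a)\notin J(R_{2})$, citing the proof of \cite[Proposition 2.23 (2)]{Hani}; you instead impose $f^{-1}(J(R_{2}))=J(R_{1})$ as an explicit hypothesis (only the inclusion $f^{-1}(J(R_{2}))\subseteq J(R_{1})$ is used). Injectivity alone does not give this, and in fact the paper's part (1) fails as stated: let $f$ be the inclusion $\mathbb{Z}\hookrightarrow\mathbb{Z}_{(p)}$ and $I_{2}=p\mathbb{Z}_{(p)}$. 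Since $\mathbb{Z}_{(p)}$ is quasi-local, $I_{2}$ is a weakly $J$-ideal by Theorem \ref{quasi}, yet $f^{-1}(I_{2})=p\mathbb{Z}$ is not a weakly $J$-ideal of $\mathbb{Z}$, because a nonzero weakly $J$-ideal must be contained in $J(\mathbb{Z})=0$ by Proposition \ref{J(R)}. So your extra hypothesis is not an avoidable weakening but a necessary repair, and the step you flagged as delicate --- the radical transfer under a mere monomorphism, rather than the preservation of $0\neq ab$ --- is precisely where the paper's own statement and proof have a gap.
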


\begin{enumerate}
\item If $f$ is a monomorphism and $I_{2}$ is a weakly $J$-ideal of $R_{2}$,
then $f^{-1}(I_{2})$ is a weakly $J$-ideal of $R_{1}.$

\item If $f$ is an epimorphism and $I_{1}$ is a weakly $J$-ideal of $R_{1}$
with $Ker(f)\subseteq I_{1}$, then $f(I_{1})$ is a weakly $J$-ideal of
$R_{2}.$
\end{enumerate}

\begin{proof}
(1) Suppose that $a,b\in R_{1}$ with $0\neq ab\in f^{-1}(I_{2})$ and $a\notin
J(R_{1})$. Observe that $f(a)\notin J(R_{2})$ by the proof of
\cite[Proposition 2.23 (2)]{Hani}. Since $Ker(f)=0$, we have $0\neq
f(ab)=f(a)f(b)\in I_{2}.$ Since $I_{2}$ is a weakly $J$-ideal of $R_{2}$, we
get $f(b)\in I_{2},$ and so $b\in f^{-1}(I_{2})$.

(2) Let $a,b\in R_{2}$ and $0\neq ab\in f(I_{1})$. Since $f$ is onto, $a=f(x)$
and $b=f(y)$ for some $x,y\in R_{1}$. Hence, $0\neq f(x)f(y)=f(xy)\in
f(I_{1})$. Since $Ker(f)\subseteq I_{1}$, we have $0\neq xy\in I_{1}$ which
implies $x\in J(R_{1})$ or $y\in I_{1}$. Thus, $a=f(x)\in J(R_{2})$ by
\cite[Lemma 2.22]{Hani} or $b=f(y)\in f(I_{1})$ and we are done.
\end{proof}

\begin{corollary}
\label{c/}Let $I$ and $K$ be proper ideals of $R$ with $K\subseteq I$.
\end{corollary}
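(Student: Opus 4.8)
The plan is to read this as the quotient statement that naturally accompanies Proposition \ref{f}, namely: (1) if $I$ is a weakly $J$-ideal of $R$, then $I/K$ is a weakly $J$-ideal of $R/K$; and (2) if $K$ is a weakly $J$-ideal of $R$ and $I/K$ is a weakly $J$-ideal of $R/K$, then $I$ is a weakly $J$-ideal of $R$. Both directions are obtained by applying the homomorphism results to the canonical projection $\pi\colon R\to R/K$.

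For part (1) I would simply observe that $\pi$ is an epimorphism with $\operatorname{Ker}(\pi)=K\subseteq I$, so Proposition \ref{f}(2) applies verbatim and yields that $\pi(I)=I/K$ is a weakly $J$-ideal of $R/K$. No additional argument is required here.

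For part (2), the essential preliminary is to identify the Jacobson radical of the quotient. Since $K$ is a weakly $J$-ideal of $R$, Proposition \ref{J(R)} forces $K\subseteq J(R)$, and for an ideal contained in $J(R)$ one has $J(R/K)=J(R)/K$ (every maximal ideal already contains $K$, so the correspondence of maximal ideals gives the equality). With this in hand I would take $a,b\in R$ with $0\neq ab\in I$ and $a\notin J(R)$, and split according to whether $ab\in K$. If $ab\notin K$, then $0\neq(a+K)(b+K)=ab+K\in I/K$, while $a\notin J(R)$ combined with $J(R/K)=J(R)/K$ gives $a+K\notin J(R/K)$; since $I/K$ is a weakly $J$-ideal of $R/K$ we obtain $b+K\in I/K$, that is $b\in I$. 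If instead $ab\in K$, then $0\neq ab\in K$ with $a\notin J(R)$ forces $b\in K\subseteq I$ because $K$ is itself a weakly $J$-ideal.

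The main obstacle is precisely the case $ab\in K$: here passing to $R/K$ destroys the hypothesis, since $ab+K=0$ and the weakly-$J$ condition is vacuous in the quotient. This is exactly why one must assume that $K$ is a weakly $J$-ideal rather than merely that $K\subseteq I$, and it is also this hypothesis that, via Proposition \ref{J(R)}, secures the identity $J(R/K)=J(R)/K$ needed to transfer the condition $a\notin J(R)$ to the quotient.
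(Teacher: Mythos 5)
Your reconstruction of parts (1) and (3) of the corollary is correct, and your proof is essentially the paper's own: part (1) by applying Proposition \ref{f}(2) to the canonical projection, and the other part by the case split on whether $ab\in K$, using $K\subseteq J(R)$ (via Proposition \ref{J(R)}) to get $J(R/K)=J(R)/K$ and hence $a+K\notin J(R/K)$. Your observation that $ab\in K$ is the case the quotient cannot see, and that the hypothesis on $K$ exists precisely to handle it, matches the paper's reasoning exactly; and since the ambient hypothesis already gives $ab\neq 0$, invoking the weakly $J$-ideal property of $K$ there is legitimate.

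The only discrepancy is that the paper's corollary has three assertions, and your reading omits its item (2): if $K$ is a \emph{$J$-ideal} (not merely weakly) and $I/K$ is a weakly $J$-ideal of $R/K$, then $I$ is a \emph{$J$-ideal} of $R$ — a genuinely stronger conclusion. Your argument adapts with one change: starting from $ab\in I$ and $a\notin J(R)$ with no nonvanishing assumption, the case $ab\in K$ still yields $b\in K\subseteq I$ because a full $J$-ideal needs no hypothesis $ab\neq 0$, while the case $ab\notin K$ gives $ab+K\neq K$, so the weakly $J$-ideal property of $I/K$ applies as before. So nothing in your method fails; you simply proved two of the three statements.
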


\begin{enumerate}
\item If $I$ is a weakly $J$-ideal of $R$, then $I/K$ is a weakly $J$-ideal of
$R/K$.

\item If $K$ is a $J$-ideal of $R$ and $I/K$ is a weakly $J$-ideal of $R/K$,
then $I$ is a $J$-ideal of $R$.

\item If $K$ is a weakly $J$-ideal of $R$ and $I/K$ is a weakly $J$-ideal of
$R/I$, then $I$ is a weakly $J$-ideal of $R$.
\end{enumerate}

\begin{proof}
(1) Follows by Proposition \ref{f}.

(2) Let $a,b\in R$ with $ab\in I$ and $a\notin J(R)$. If $ab\in K$, then $b\in
K\subseteq I$. Now, suppose that $ab\notin K$. Since $K$ is a $J$-ideal,
$K\subseteq J(R)$ and so clearly $a+K\notin J(R/K)$. Since $K\neq
(a+K)(b+K)=ab+K\in I/K$ and $I/K$ is weakly $J$-ideal, we have $(b+K)\in I/K$.
Thus, $b\in I$ and we are done.

(3) Similar to (2).
\end{proof}

Recall from \cite{Bouvier} that a ring $R$ is called presimplifiable if
whenever $a,b\in R$ with $a=ab$, then $a=0$ or $b\in U(R)$. Equivalently, $R$
is presimplifiable if and only if $Z(R)\subseteq J(R)$. The next result states
that in a presimplifiable ring, weakly $J$ -ideals and $J$-ideals coincide.

\begin{proposition}
\label{prese}Every weakly $J$-ideal of a presimplifiable ring is a $J$-ideal.
\end{proposition}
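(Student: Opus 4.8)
The plan is to prove the statement directly from the equivalent characterization of presimplifiability recorded just above the proposition, namely that $R$ is presimplifiable if and only if $Z(R)\subseteq J(R)$. Let $I$ be a weakly $J$-ideal of such a ring $R$, and take $a,b\in R$ with $ab\in I$ and $a\notin J(R)$; the goal is to show $b\in I$, which is precisely the defining condition for $I$ to be a $J$-ideal.

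First I would make the single key observation: since $Z(R)\subseteq J(R)$ and $a\notin J(R)$, the element $a$ cannot lie in $Z(R)$, i.e. $a$ is a non-zero-divisor. This is exactly what separates the presimplifiable case from the general one, where a weakly $J$-ideal fails to be a $J$-ideal only through annihilation by a zero-divisor outside $J(R)$ (as in the $\mathbb{Z}_6$ example in the introduction).

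Then I would split into two cases according to whether $ab$ vanishes. If $ab\neq 0$, then $0\neq ab\in I$ with $a\notin J(R)$, so the weakly $J$-ideal hypothesis immediately yields $b\in I$. If $ab=0$, then because $a$ is a non-zero-divisor, cancellation forces $b=0\in I$. In either case $b\in I$, so $I$ is a $J$-ideal, as required.

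The argument is essentially a two-line case analysis, so there is no serious obstacle; the only point that must be invoked carefully is the translation of the presimplifiable hypothesis into $Z(R)\subseteq J(R)$, which is already available in the preceding paragraph. As an alternative route one could assume $I$ is not a $J$-ideal, apply Theorem \ref{IJ} to obtain $I^{2}=0$, choose $0\neq x\in I$ so that $x^{2}=0$ and hence $x\in Z(R)\subseteq J(R)$, and attempt to force a contradiction; but this detour is longer, and I expect the direct case split to be the cleaner presentation.
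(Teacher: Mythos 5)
Your proof is correct. It is, however, a genuinely different presentation from the paper's: the paper argues in two lines by citing the external fact that the zero ideal of a presimplifiable ring is a $J$-ideal (\cite[Corollary 5]{Haniece}) and then applying Corollary \ref{c/} (2) with $K=0$, which promotes the weakly $J$-ideal $I\cong I/0$ to a $J$-ideal of $R\cong R/0$. Your argument inlines exactly this machinery: the assertion that $0$ is a $J$-ideal of $R$ is precisely the statement that $ab=0$ and $a\notin J(R)$ force $b=0$, i.e.\ that $Z(R)\subseteq J(R)$, which is the characterization you invoke, and your case split on whether $ab$ vanishes is the same dichotomy that drives the proof of Corollary \ref{c/} (2). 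What your version buys is self-containedness: it needs nothing beyond the definition of a weakly $J$-ideal and the equivalence recorded in the paragraph preceding the proposition, with no external citation and no quotient lemma; what the paper's version buys is brevity and a demonstration of how the quotient corollary gets reused. One further remark: you were right to discard the alternative route through Theorem \ref{IJ}. Knowing $I^{2}=0$ only yields some $x\in I$ with $x\in Z(R)\subseteq J(R)$, which is no contradiction, since any weakly $J$-ideal is already contained in $J(R)$ by Proposition \ref{J(R)}; moreover that route would still need the case $I=0$ handled by the very fact the paper cites, so it is not merely longer but does not obviously close at all.
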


\begin{proof}
Let $R$ be a presimplifiable ring and $I$ be a weakly $J$-ideal of $R.$ Then
$0$ is a $J$-ideal by \cite[Corollary 5]{Haniece}. So, the claim follows from
Corollary \ref{c/} (2).
\end{proof}

It is well known that presimplifiable property does not pass in general to
homomorphic images, \cite{Anderson1}. In view of Proposition \ref{prese} and
\cite[Theorem 8]{Haniece}, we prove that this holds under a certain condition.

\begin{corollary}
\label{pre}If $R$ is a presimplifiable ring and $I$ is a weakly $J$-ideal of
$R$, then $R/I$ is presimplifiable.
\end{corollary}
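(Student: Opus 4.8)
The plan is to reduce the weakly $J$-ideal hypothesis to the already-understood $J$-ideal case and then invoke the corresponding result for $J$-ideals. First I would observe that since $R$ is presimplifiable and $I$ is a weakly $J$-ideal of $R$, Proposition \ref{prese} immediately upgrades $I$ to an honest $J$-ideal of $R$. This is precisely the point of having established Proposition \ref{prese} first: in the presimplifiable setting the new notion collapses back to the old one, so no separate argument tailored to weakly $J$-ideals is required.

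Once $I$ is known to be a $J$-ideal, I would simply apply \cite[Theorem 8]{Haniece}, which asserts that the quotient of a presimplifiable ring by a $J$-ideal is again presimplifiable. Combining the two steps yields that $R/I$ is presimplifiable, and the corollary follows. The structure is deliberately a two-line reduction, mirroring the phrase ``in view of Proposition \ref{prese} and \cite[Theorem 8]{Haniece}'' that motivates the statement.

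If one preferred a self-contained argument avoiding the external citation, I would instead verify the characterization $Z(R/I)\subseteq J(R/I)$ directly. Having $I$ a $J$-ideal forces $I\subseteq J(R)$, whence $J(R/I)=J(R)/I$. Then for any $\bar a\in Z(R/I)$ I would pick $\bar b\neq\bar 0$ with $\overline{ab}=\bar 0$, so that $ab\in I$ while $b\notin I$; the defining property of a $J$-ideal then forces $a\in J(R)$ (otherwise $ab\in I$ with $a\notin J(R)$ would give $b\in I$), i.e. $\bar a\in J(R/I)$. The only mild subtlety, hardly an obstacle, is the identity $J(R/I)=J(R)/I$, which rests on $I\subseteq J(R)$ via the correspondence between maximal ideals of $R/I$ and maximal ideals of $R$ containing $I$; everything else is a direct unwinding of definitions, so I expect no genuine difficulty.
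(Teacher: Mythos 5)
Your proposal is correct and follows exactly the route the paper intends: Proposition \ref{prese} collapses the weakly $J$-ideal to a $J$-ideal in the presimplifiable setting, and then \cite[Theorem 8]{Haniece} (the characterization of $J$-ideals via $I\subseteq J(R)$ and $R/I$ presimplifiable) finishes the argument. Your optional self-contained verification of $Z(R/I)\subseteq J(R/I)$ is also sound and, in effect, reproves the relevant direction of that cited theorem, including the needed identity $J(R/I)=J(R)/I$.
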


Recall from \cite{Haniece} that a proper ideal $I$ of a ring $R$ is said to be
quasi $J$-ideal if $\sqrt{I}$ is a $J$-ideal of $R$. A ring $R$ is called
quasi presimplifiable if whenever $a,b\in R$ with $a=ab$, then $a\in N(R)$ or
$b\in U(R)$. We need the following lemma which justifies the relation between
these two concepts.

\begin{lemma}
\label{qp}\cite{Haniece} Let $I$ be a proper ideal of a ring $R$. Then $I$ is
a quasi $J$-ideal of $R$ if and only if $I\subseteq J(R)$ and $R/I$ is quasi presimplifiable.
\end{lemma}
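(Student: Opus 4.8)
The plan is to lean throughout on two elementary facts: that $J(R)$ is a radical ideal (being an intersection of maximal, hence prime, ideals) and that $N(R/I)=\sqrt{I}/I$. The first gives immediately that $I\subseteq J(R)$ is equivalent to $\sqrt{I}\subseteq J(R)$, so in either direction I may freely pass between these, noting also that $\sqrt{I}$ is proper since $1\notin\sqrt{I}$. I will additionally use the paper's characterization that a ring is presimplifiable iff $Z\subseteq J$, together with $J(R/\sqrt{I})=J(R)/\sqrt{I}$, which holds because every maximal ideal of $R$ contains $\sqrt{I}\subseteq J(R)$.

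For the forward implication, suppose $\sqrt{I}$ is a $J$-ideal. Then $\sqrt{I}\subseteq J(R)$, hence $I\subseteq J(R)$. To see that $R/I$ is quasi presimplifiable, take $\bar a=\bar a\bar b$, that is $a(1-b)\in I\subseteq\sqrt{I}$, and apply the $J$-ideal property of $\sqrt{I}$ to the product $(1-b)\,a$: if $1-b\notin J(R)$ then $a\in\sqrt{I}$, so $\bar a\in N(R/I)$; otherwise $1-b\in J(R)$, whence $b=1-(1-b)\in U(R)$ and $\bar b\in U(R/I)$. Either way the defining condition holds.

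The converse is the substantive direction, and the key idea is to pass not to $R/I$ but to the reduced ring $R/\sqrt{I}\cong (R/I)/N(R/I)$, where the membership condition becomes power-free. First I would prove the sub-lemma that if $S$ is quasi presimplifiable then $S/N(S)$ is presimplifiable: from $\bar a=\bar a\bar b$ in $S/N(S)$ one obtains $a^k(1-b)^k=0$ in $S$ for some $k$, and factoring $1-(1-b)^k=bw$ gives $a^k=a^k(bw)$; quasi presimplifiability then forces $a^k\in N(S)$ (so $\bar a=0$) or $bw\in U(S)$ (so $b$, and hence its image, is a unit). Applying this with $S=R/I$ shows $R/\sqrt{I}$ is presimplifiable. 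Finally, to verify that $\sqrt{I}$ is a $J$-ideal, take $xy\in\sqrt{I}$ with $x\notin J(R)$ and suppose $y\notin\sqrt{I}$; then $\bar x\,\bar y=0$ in $R/\sqrt{I}$ with $\bar y\neq 0$, and $\bar x\neq 0$ since $x\notin J(R)$ while $\sqrt{I}\subseteq J(R)$, so $\bar x\in Z(R/\sqrt{I})\subseteq J(R/\sqrt{I})=J(R)/\sqrt{I}$, giving $x\in J(R)$, a contradiction.

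I expect the main obstacle to be exactly this converse: a naive attempt to check the $J$-ideal condition directly inside $R/I$ runs into the case $1-\bar x^{\,n}\in U(R/I)$, which arises after clearing the exponent from $(xy)^n\in I$ and cannot be excluded. Routing through the reduced ring $R/\sqrt{I}$ is what eliminates the spurious powers and lets the hypothesis $x\notin J(R)$ be used cleanly, and proving that quasi presimplifiability descends to presimplifiability of the reduced ring is the one genuinely new computation.
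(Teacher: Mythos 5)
Your proof is correct, but there is nothing in the paper to compare it against: the lemma is stated here only as an imported result, cited from the companion paper \cite{Haniece} (``Quasi $J$-ideals of commutative rings''), and no proof appears in this paper. Judged on its own, your argument is complete and sound. The forward direction is the routine verification (using that a $J$-ideal is contained in $J(R)$, the analogue of Proposition \ref{J(R)}, and that $1-(1-b)$ is a unit when $1-b\in J(R)$). The substantive converse is carried by your sub-lemma that a quasi presimplifiable ring $S$ has presimplifiable quotient $S/N(S)$ --- the computation $a^k=a^k(bw)$ with $1-(1-b)^k=bw$ is right, and $bw\in U(S)$ does force $b\in U(S)$ --- applied to $S=R/I$ to get $R/\sqrt{I}\cong (R/I)/N(R/I)$ presimplifiable, after which the characterization $Z\subseteq J$ of presimplifiable rings together with $J(R/\sqrt{I})=J(R)/\sqrt{I}$ (valid since $\sqrt{I}\subseteq J(R)$, as $J(R)$ is radical) finishes the verification that $\sqrt{I}$ is a $J$-ideal. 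A mild streamlining worth noting: the paper elsewhere cites \cite[Theorem 8]{Haniece}, the statement that $I$ is a $J$-ideal if and only if $I\subseteq J(R)$ and $R/I$ is presimplifiable; applying that result to $\sqrt{I}$ reduces the whole lemma to the equivalence ``$R/I$ quasi presimplifiable $\Leftrightarrow$ $R/\sqrt{I}$ presimplifiable,'' of which your sub-lemma is the harder half (the other half uses that units lift modulo nilpotents). Your direct route avoids relying on that theorem and is equally valid.
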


\begin{proposition}
\label{qpre}Let $R$ be a quasi presimplifiable ring and $I$ a weakly $J$-ideal
of $R$. Then $R/I$ is a quasi presimplifiable ring.
\end{proposition}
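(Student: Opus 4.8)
The plan is to reduce everything to Lemma \ref{qp}. Since $I$ is a weakly $J$-ideal, Proposition \ref{J(R)} gives $I\subseteq J(R)$; hence by Lemma \ref{qp} the assertion that $R/I$ is quasi presimplifiable is equivalent to the assertion that $I$ is a quasi $J$-ideal, i.e. that $\sqrt{I}$ is a $J$-ideal of $R$. So the whole proof comes down to showing that $\sqrt{I}$ is a $J$-ideal. Before splitting into cases, I would first record the key consequence of the hypothesis: applying Lemma \ref{qp} to the zero ideal (which satisfies $0\subseteq J(R)$ and for which $R/0=R$ is quasi presimplifiable by assumption) shows that $0$ is a quasi $J$-ideal, that is, $N(R)=\sqrt{0}$ is a $J$-ideal of $R$. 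This single fact encodes the quasi presimplifiability of $R$ in the exact form I will need.

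Next I would dichotomize via Theorem \ref{IJ}. If $I$ is itself a $J$-ideal, then $\sqrt{I}$ is a $J$-ideal by a direct computation: for $xy\in\sqrt{I}$ with $x\notin J(R)$ one has $x^{n}y^{n}=(xy)^{n}\in I$ for some $n$, and since $J(R)$ is radical (it is an intersection of maximal, hence prime, ideals, so $\sqrt{J(R)}=J(R)$) the element $x^{n}$ still lies outside $J(R)$; the $J$-ideal property then forces $y^{n}\in I$, i.e. $y\in\sqrt{I}$. If instead $I$ is not a $J$-ideal, Theorem \ref{IJ} gives $I^{2}=0$ and Corollary \ref{squareI}(1) gives $I\subseteq N(R)$; taking radicals in $0\subseteq I\subseteq N(R)$ and using $\sqrt{N(R)}=N(R)$ yields $\sqrt{I}=N(R)$, which is a $J$-ideal by the observation of the previous paragraph.

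In both cases $\sqrt{I}$ is a $J$-ideal, so $I$ is a quasi $J$-ideal; combined with $I\subseteq J(R)$, Lemma \ref{qp} delivers that $R/I$ is quasi presimplifiable. I expect the main obstacle to be the non-$J$-ideal case: the essential insight is that there the radical collapses to exactly $N(R)$, so the problem is converted into the statement that $N(R)$ is a $J$-ideal, and that is precisely what quasi presimplifiability of $R$ buys us through Lemma \ref{qp} applied to the zero ideal. The $J$-ideal case is routine once one notes that $J(R)$ is radical; the only mild care needed there is the passage from $x\notin J(R)$ to $x^{n}\notin J(R)$.
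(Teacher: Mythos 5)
Your proof is correct, but it runs on a different case decomposition than the paper's. Both arguments share the same skeleton: reduce, via Lemma \ref{qp} together with $I\subseteq J(R)$ from Proposition \ref{J(R)}, to showing that $\sqrt{I}$ is a $J$-ideal, and extract from the quasi presimplifiability of $R$ --- again via Lemma \ref{qp}, applied to the zero ideal --- the key fact that $N(R)$ is a $J$-ideal. The difference is where the dichotomy is made. The paper works elementwise: given $ab\in\sqrt{I}$ with $a\notin J(R)$, it picks $n$ with $a^{n}b^{n}\in I$ and splits on whether $a^{n}b^{n}=0$ (then $ab\in N(R)$ and the $J$-ideal $N(R)$ gives $b\in N(R)\subseteq\sqrt{I}$) or $a^{n}b^{n}\neq 0$ (then the weakly $J$-ideal property of $I$ applies directly, using $a^{n}\notin J(R)$). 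You instead dichotomize globally on the ideal $I$ via Theorem \ref{IJ}: either $I$ is a $J$-ideal, in which case the elementwise computation needs no zero/nonzero split and --- notably --- no quasi presimplifiability at all, or $I$ is not, in which case $I^{2}=0$ and Corollary \ref{squareI} give $I\subseteq N(R)$, hence $\sqrt{I}=N(R)$ exactly, and the zero-ideal observation finishes. Your route invokes heavier machinery (Theorem \ref{IJ} and Corollary \ref{squareI}) that the paper's proof avoids, but it buys two things: it isolates precisely where the hypothesis on $R$ is used (only in the degenerate case $I^{2}=0$), and it yields as a byproduct that the radical of any $J$-ideal is again a $J$-ideal. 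The paper's elementwise split is more economical, needing only Lemma \ref{qp} and the weak $J$-property; both proofs rely on the same small point, handled explicitly by you and dismissed as ``clearly'' by the paper, that $J(R)$ is a radical ideal so $a\notin J(R)$ forces $a^{n}\notin J(R)$.
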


\begin{proof}
Suppose $ab\in\sqrt{I}$ and $a\notin J(R)$. Then $a^{n}b^{n}\in I$ for some
$n\in%
\mathbb{N}
$. Suppose $a^{n}b^{n}=0$. Since $R$ is quasi presimplifiable, then $0$ is a
quasi $J$-ideal of $R$ by Lemma \ref{qp} and so $N(R)$ is a $J$-ideal. Hence,
$ab\in N(R)$ implies $b\in N(R)\subseteq\sqrt{I}$. Now, suppose that $0\neq
a^{n}b^{n}\in I$. Since clearly $a^{n}\notin J(R),$ we conclude that $b^{n}\in
I$ and $b\in\sqrt{I}$. Hence $\sqrt{I}$ is a $J$-ideal and so $I$ is a quasi
$J$-ideal of $R$. Therefore, $R/I$ is a quasi presimplifiable ring by Lemma
\ref{qp}.
\end{proof}

\begin{proposition}
Let $R$ be a Noetherian domain and $I$ be a proper ideal of $R$. Then $I$ is a
$J$-ideal of $R$ if and only if $I\subseteq J(R)$ and $I/I^{n}$ is a weakly
$J$-ideal of $R/I^{n}$ for all positive integers $n.$
\end{proposition}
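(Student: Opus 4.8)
The plan is to prove the two implications separately, noting that the forward direction is essentially formal while the reverse direction is where the Noetherian domain hypothesis does the real work. For the forward implication, suppose $I$ is a $J$-ideal. First I would record that $I\subseteq J(R)$: this is the general fact (visible from Proposition \ref{J(R)}) that every weakly $J$-ideal, hence every $J$-ideal, lies in the Jacobson radical, since for each nonzero $a\in I$ one has $0\neq a\cdot 1\in I$ with $1\notin I$, forcing $a\in J(R)$. Then, since every $J$-ideal is a weakly $J$-ideal and $I^{n}\subseteq I$ for every $n\geq 1$, Corollary \ref{c/}(1) immediately gives that $I/I^{n}$ is a weakly $J$-ideal of $R/I^{n}$. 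I would emphasize that this direction uses neither the Noetherian nor the domain hypothesis.

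For the reverse implication, the key device is Krull's intersection theorem, which yields $\bigcap_{n=1}^{\infty} I^{n}=0$; this is justified by the fact that $R$ is Noetherian together with $I\subseteq J(R)$ (equivalently, by the integral-domain cancellation applied to the relation $(1-a)x=0$ coming from the theorem, valid since $I$ is proper). I would also record the standard fact that, because $I^{n}\subseteq I\subseteq J(R)$, the maximal ideals of $R/I^{n}$ are exactly the images of the maximal ideals of $R$, whence $J(R/I^{n})=J(R)/I^{n}$. Now take $a,b\in R$ with $ab\in I$ and $a\notin J(R)$, aiming for $b\in I$. If $ab=0$, then since $a\notin J(R)$ forces $a\neq 0$ and $R$ is a domain, we obtain $b=0\in I$. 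If $ab\neq 0$, then by $\bigcap_{n}I^{n}=0$ there is some $n$ with $ab\notin I^{n}$, so in $R/I^{n}$ we have $0\neq\overline{a}\,\overline{b}=\overline{ab}\in I/I^{n}$, while $a\notin J(R)$ gives $\overline{a}\notin J(R)/I^{n}=J(R/I^{n})$. Applying the weakly $J$-ideal hypothesis on $I/I^{n}$ yields $\overline{b}\in I/I^{n}$, that is, $b\in I$; hence $I$ is a $J$-ideal.

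The step I expect to be the main obstacle is precisely the case where $ab$ lies in \emph{every} power $I^{n}$: there $\overline{ab}$ vanishes in each $R/I^{n}$, so the weakly $J$-ideal property—which constrains only nonzero products—gives no leverage whatsoever. This is exactly what the combination of Krull's intersection theorem (forcing $\bigcap_{n}I^{n}=0$) and the domain hypothesis (cancelling $a$ once $ab=0$) is designed to overcome; the Noetherian assumption enters only through Krull's theorem, and the hypothesis $I\subseteq J(R)$ is what guarantees both the vanishing of the intersection and the correct transfer of Jacobson radicals to the quotients $R/I^{n}$.
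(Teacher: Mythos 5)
Your proof is correct and takes essentially the same approach as the paper: Proposition \ref{J(R)} and Corollary \ref{c/} (1) for the forward direction, and, for the converse, Krull's intersection theorem together with the transfer $J(R/I^{n})=J(R)/I^{n}$ (valid since $I^{n}\subseteq I\subseteq J(R)$) and the domain hypothesis to dispose of the case $ab=0$. The only cosmetic difference is that you split the converse into the cases $ab=0$ and $ab\neq 0$, while the paper splits according to whether $ab\in I^{n}$ for all $n$; by Krull's theorem these amount to the same case distinction.
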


\begin{proof}
Suppose $I$ is a $J$-ideal of $R$. Then $I\subseteq J(R)$ by Proposition
\ref{J(R)} and $I/I^{n}$ is a weakly $J$-ideal of $R/I^{n}$ by Corollary
\ref{c/} (1). Conversely, suppose that for all $n\in%
\mathbb{N}
$, $I/I^{n}$ is a weakly $J$-ideal of $R/I^{n}$ and let $ab\in I$. If
$ab\notin I^{n}$ for some $n\geq2$, then clearly $I^{n}\neq(a+I^{n}%
)(b+I^{n})\in I/I^{n}$ which implies $(a+I^{n})\in J(R/I^{n})$ or
$(b+I^{n})\in I/I^{n}.$ Since by assumption, $I^{n}\subseteq I\subseteq J(R)$,
then $J(R/I^{n})=J(R)/I^{n}$. Thus, $a\in J(R)$ or $b\in I$, as needed. Now,
assume that $ab\in I^{n}$ for all $n.$ Since $R$ is Noetherian, the Krull's
intersection theorem implies that $\bigcap\limits_{n=1}^{\infty}I^{n}=0$.
Therefore, $ab=0$ and since $R$ is a domain, we conclude $a=0$ or $b=0$ and we
are done.
\end{proof}

\begin{definition}
Let $I$ be a non-zero ideal of a ring $R$. An element $a+I\in R/I$ is called a
strong zero divisor in $R/I$ if there exists $I\neq b+I\in R/I$ such that
$(a+I)(b+I)=I$ and $ab\neq0$.
\end{definition}

It is clear that any strong zero divisor in $R/I$ is a zero divisor. The
converse is not true since for example $\bar{2}+\left\langle \bar
{4}\right\rangle $ is a zero divisor in $%
\mathbb{Z}
_{8}/\left\langle \bar{4}\right\rangle $ which is not a strong zero divisor.

For an ideal $I$ of a ring $R$, we denote the set of strong zero divisors of
$R/I$ by $SZ(R/I)$. It is clear that if $I=0$, (e.g. $R$ is a field), then
$SZ(R/I)=\phi$.

Let $I$ be a non-zero ideal of a ring $R$. Analogous to to the presimplifiable
rings, we define a ring $R/I$ to be $S$-presimplifiable if $SZ(R/I)\subseteq
J(R/I)$. Next, we characterize non-zero weakly $J$-ideals in terms of
$S$-presimplifiable quotient rings.

\begin{theorem}
Let $I$ be a non-zero ideal of a ring $R$. Then $I$ is a weakly $J$-ideal of
$R$ if and only if $I\subseteq J(R)$ and $R/I$ is $S$-presimplifiable.
\end{theorem}

\begin{proof}
Suppose $I$ is a weakly $J$-ideal of $R$ and note that $I\subseteq J(R)$ by
Proposition \ref{J(R)}. Let $a+I\in SZ(R/I)$ and choose $I\neq b+I\in R/I$
such that $(a+I)(b+I)=I$ and $ab\neq0$. Then $0\neq ab\in I$ and $b\notin I$.
Hence, $a\in J(R)$ as $I$ is a weakly $J$-ideal. Therefore, $(a+I)\in
J(R)/I=J(R/I)$ and we are done. Conversely, let $a,b\in R$ such that $0\neq
ab\in I$ and $b\notin I$. Then clearly, $a+I$ is a strong zero divisor in
$R/I$ and so $a+I\in J(R/I)$. It follows that $a\in J(R)$ and so $I$ is a
weakly $J$-ideal of $R$.
\end{proof}

It is clear that for a non zero ideal $I$ of a ring $R$, if $R/I$ is
presimplifiable, then it is $S$-presimplifiable. However, we have seen in
Example \ref{Ex} that $0(+)\left\langle (\bar{1},\bar{0})\right\rangle $ is a
weakly $J$-ideal of $%
\mathbb{Z}
(+)\left(
\mathbb{Z}
_{2}\times%
\mathbb{Z}
_{2}\right)  $ that is not a $J$-ideal. In view of the above theorem and
\cite[Theorem 8]{Haniece}, we conclude that $%
\mathbb{Z}
(+)\left(
\mathbb{Z}
_{2}\times%
\mathbb{Z}
_{2}\right)  /0(+)\left\langle (\bar{1},\bar{0})\right\rangle $ is an
$S$-presimplifiable ring that is not presimplifiable.

It is well known that for any ring $R$, $J(R[\left\vert x\right\vert
])=J(R)+xR[\left\vert x\right\vert ]$.

\begin{proposition}
Let $R$ be a ring. If $I$ is a weakly $J$-ideal of $R\left[  \left\vert
x\right\vert \right]  $ (resp., $R[x]$), then $I\cap R$ is a weakly $J$-ideal
of $R$.
\end{proposition}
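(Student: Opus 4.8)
The plan is to realise $I\cap R$ as the contraction of $I$ along the natural embedding and then invoke the behaviour of weakly $J$-ideals under monomorphisms. Write $\iota:R\to R[|x|]$ (resp.\ $\iota:R\to R[x]$) for the canonical map sending each element of $R$ to the corresponding constant series (polynomial). This $\iota$ is a ring monomorphism, and under the usual identification of $R$ with $\iota(R)$ one has $\iota^{-1}(I)=I\cap R$. Thus the assertion is precisely that $\iota^{-1}(I)$ is a weakly $J$-ideal of $R$, which is the content of Proposition \ref{f}(1) --- provided the hypothesis used in its proof, namely that $\iota$ carries elements outside $J(R)$ to elements outside $J(R[|x|])$, is verified in the present setting.

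Establishing this Jacobson-radical compatibility is the only non-formal step. For the power series case it is immediate from the displayed identity $J(R[|x|])=J(R)+xR[|x|]$: a constant series $a$ lies in $J(R[|x|])$ exactly when its constant term $a$ lies in $J(R)$, so $a\notin J(R)$ forces $a\notin J(R[|x|])$. For the polynomial case no such clean closed form is supplied, so instead I would use the retraction $\pi:R[x]\to R$ defined by $x\mapsto 0$ (evaluation at zero). Since $\pi$ is a surjective ring homomorphism, it satisfies $\pi(J(R[x]))\subseteq J(R)$ (preimages of maximal ideals of $R$ are maximal in $R[x]$, so $J(R[x])$ maps into every maximal ideal of $R$), and $\pi\circ\iota=\mathrm{id}_R$. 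Hence if $a\in R$ with $a\in J(R[x])$, then $a=\pi(a)\in J(R)$; contrapositively, $a\notin J(R)$ implies $a\notin J(R[x])$, exactly as required.

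With this step secured, the remainder is routine, and I would argue it directly rather than relying on the external citation inside the proof of Proposition \ref{f}(1). First note that $I\cap R$ is proper, since $1\in I\cap R$ would give $1\in I$, contradicting the properness of $I$. Then take $a,b\in R$ with $0\neq ab\in I\cap R$ and $a\notin J(R)$. Viewed inside $R[|x|]$ (resp.\ $R[x]$), this says $0\neq ab\in I$ while $a$ avoids the Jacobson radical of the larger ring by the previous paragraph; the weakly $J$-ideal property of $I$ then yields $b\in I$, and since $b\in R$ we get $b\in I\cap R$, as needed.

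The only genuine obstacle is the polynomial case: because the excerpt supplies the closed form for $J(R[|x|])$ but not for $J(R[x])$, one must provide an independent argument that $J(R[x])\cap R\subseteq J(R)$. The retraction argument above handles this cleanly, appealing only to the general fact that surjective homomorphisms contract the Jacobson radical, and thereby sidesteps the need for any explicit description of $J(R[x])$.
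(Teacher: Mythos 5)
Your proof is correct, and its core step is the same as the paper's: check the defining condition directly on constants, using that an element of $R$ lying outside $J(R)$ remains outside the Jacobson radical of the larger ring. The difference is one of coverage. The paper's proof handles only the power series case, where the needed compatibility is immediate from the displayed identity $J(R[\vert x\vert])=J(R)+xR[\vert x\vert]$; it never addresses $R[x]$ at all, even though the statement includes it. Your retraction argument --- evaluation at zero $\pi:R[x]\rightarrow R$, the fact that surjections contract Jacobson radicals (preimages of maximal ideals under a surjection are maximal, so $\pi(J(R[x]))\subseteq J(R)$), and $\pi\circ\iota=\mathrm{id}_{R}$ --- supplies precisely the missing inclusion $J(R[x])\cap R\subseteq J(R)$, so your write-up is in fact more complete than the paper's. (Alternatively, for commutative rings one can quote the closed form $J(R[x])=N(R)[x]$, but your argument avoids needing any explicit description.) Your initial framing via Proposition \ref{f}(1) and the identification $I\cap R=\iota^{-1}(I)$ is also sound, and you were right to flag rather than blindly invoke the hypothesis that the embedding sends elements outside $J(R)$ to elements outside the bigger radical: that implication can fail for general monomorphisms (e.g.\ $\mathbb{Z}\hookrightarrow\mathbb{Z}_{(p)}$), so verifying it for these particular embeddings, as you do, is exactly what makes the argument airtight; since you then argue the main implication directly, nothing in your proof depends on the external citation.
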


\begin{proof}
(1) Suppose $I$ is a weakly $J$-ideal of $R\left[  \left\vert x\right\vert
\right]  $. Let $0\neq ab\in I\cap R$ and $a\notin J(R)$ for $a,b\in R$. Then
$0\neq ab\in I$ and $a\notin J(R\left[  \left\vert x\right\vert \right]  )$
imply that $b\in I$. Thus, $b\in I\cap R$ as needed.
\end{proof}

A proper ideal $I$ in a ring $R$ is called superfluous if whenever $J$ is an
ideal of $R$ with $I+J=R$, then $J=R$.

\begin{lemma}
\label{sup}If an ideal $I$ of a ring $R$ is a weakly $J$-ideal, then it is superfluous.
\end{lemma}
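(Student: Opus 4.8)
The plan is to reduce the superfluous property entirely to the containment $I \subseteq J(R)$, which is already available from the earlier results. First I would observe that since $I$ is a weakly $J$-ideal, the implication (1)$\Rightarrow$(2) of Proposition \ref{J(R)} yields $I \subseteq J(R)$. This is the only structural fact about $I$ that I expect to need; the weakly $J$-ideal hypothesis enters only through this containment, and after that the conclusion is really a statement about the Jacobson radical rather than about the defining condition of weakly $J$-ideals directly.

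Next, I would take an arbitrary ideal $K$ of $R$ with $I + K = R$ and argue by contradiction, assuming $K \neq R$. Then $K$ is contained in some maximal ideal $M$ of $R$. Since $J(R)$ is the intersection of all maximal ideals of $R$, we have $I \subseteq J(R) \subseteq M$, and hence $R = I + K \subseteq M$, contradicting the properness of $M$. Therefore $K = R$, which is exactly the definition of $I$ being superfluous.

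I do not anticipate any genuine obstacle here: the entire content is the standard fact that any ideal contained in $J(R)$ is small (superfluous). The only points requiring a little care are invoking the correct half of Proposition \ref{J(R)} to extract $I \subseteq J(R)$, and then exploiting the maximal-ideal description of $J(R)$ rather than trying to work from the weakly $J$-ideal definition itself, which would be needlessly awkward.
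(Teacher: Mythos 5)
Your proof is correct and follows essentially the same route as the paper: both arguments reduce the claim to the containment $I \subseteq J(R)$ obtained from Proposition \ref{J(R)} and then finish with a standard property of the Jacobson radical. The only (cosmetic) difference is in that last step: the paper writes $1 = x + y$ with $x \in I$, notes $1 - y \in J(R)$, and concludes $y$ is a unit, whereas you argue by contradiction through a maximal ideal containing $K$; these are just the two standard equivalent characterizations of ideals contained in $J(R)$ being small.
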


\begin{proof}
Suppose $I+J=R$ for some ideal $J$ of $R$. Then $1=x+y$ for some $x\in I$ and
$y\in J$ and so $1-y\in I\subseteq J(R)$ by Proposition \ref{J(R)}. Thus $y\in
J$ is a unit and $J=R$.
\end{proof}

\begin{proposition}
\label{sum}Let $I_{1}$ and $I_{2}$ be weakly $J$-ideals of a ring $R$. Then
$I_{1}+I_{2}$ is a weakly $J$-ideal of $R$.
\end{proposition}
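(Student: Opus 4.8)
The first and easiest step is to record that the sum is proper and sits inside the radical: by Proposition \ref{J(R)} each $I_i$ satisfies $I_i\subseteq J(R)$, whence $I_1+I_2\subseteq J(R)\subsetneq R$. So $I_1+I_2$ is a proper ideal, and to verify the defining property I would take $a,b\in R$ with $0\neq ab\in I_1+I_2$ and $a\notin J(R)$, write $ab=x_1+x_2$ with $x_i\in I_i$, and aim to show $b\in I_1+I_2$.

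The plan is to first reduce to the case $I_1\cap I_2=0$. By Proposition \ref{int} the ideal $K:=I_1\cap I_2$ is a weakly $J$-ideal, so I would pass to $\bar R:=R/K$ via the canonical surjection $\pi$. Since $\ker\pi=K$ is contained in both $I_1$ and $I_2$, Proposition \ref{f}(2) shows that $\pi(I_1)$ and $\pi(I_2)$ are weakly $J$-ideals of $\bar R$, and $\pi(I_1)\cap\pi(I_2)=\pi(I_1\cap I_2)=0$. If the proposition is established when the two ideals intersect trivially, then $\pi(I_1)+\pi(I_2)=\pi(I_1+I_2)$ is a weakly $J$-ideal of $\bar R$; applying Corollary \ref{c/}(3) with the weakly $J$-ideal $K$ then lifts this back to show that $I_1+I_2$ is a weakly $J$-ideal of $R$. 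The payoff of this reduction is that now $I_1I_2\subseteq I_1\cap I_2=0$.

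In the reduced situation I would argue directly from $ab=x_1+x_2$. If $ab\in I_1$, then $0\neq ab\in I_1$ with $a\notin J(R)$ forces $b\in I_1\subseteq I_1+I_2$ by the weakly $J$-property of $I_1$, and symmetrically if $ab\in I_2$. The remaining case, $ab\notin I_1$ and $ab\notin I_2$ (so that both $x_1,x_2$ are non-zero), is the crux. Here I would exploit $I_1I_2=0$: multiplying $ab=x_1+x_2$ by $x_i$ annihilates the cross term and yields $a(bx_i)=x_i^{2}\in I_i$. I would combine this with the perturbation device used in the proof of Theorem \ref{IJ}, replacing $a$ or $b$ by a sum with a suitable element of $I_1$ or $I_2$ (which keeps $a\notin J(R)$ because each $I_i\subseteq J(R)$) so as to steer the product into a single summand, producing a non-zero product of $a$ with an element congruent to $b$ that lies in one $I_i$, and then applying the weakly $J$-property to conclude $b\in I_1+I_2$.

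The main obstacle is precisely this last case: the hypothesis $ab\in I_1+I_2$ places $ab$ in the sum but in neither summand, so the defining property of neither $I_i$ applies to $ab$ directly. The two devices that make it tractable are the reduction to $I_1\cap I_2=0$, which forces $I_1I_2=0$ and hence lets the cross terms vanish, and the radical-perturbation trick from Theorem \ref{IJ}, which converts mixed membership into membership in a single $I_i$.
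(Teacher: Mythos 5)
Your reduction to the case $I_1\cap I_2=0$ is legitimate: Proposition \ref{int}, Proposition \ref{f} (2) and Corollary \ref{c/} (3) do exactly what you ask of them, and this move parallels the paper's own proof, which also passes to $R/(I_1\cap I_2)$. The genuine gap is exactly at what you call the crux: for $0\neq ab=x_1+x_2$ with $ab\notin I_1\cup I_2$ you never exhibit the promised perturbation. The identity $a(bx_i)=x_i^{2}$ gives, when $x_i^{2}\neq 0$, only $bx_i\in I_i$, which does not place $b$ in $I_1+I_2$; and to steer the product into a single summand by replacing $a$ by $a+y$ and $b$ by $b+z$ (say with $y,z\in I_2$) you would need $x_2+az+y(b+z)=0$, i.e. $(a+y)(b+z)=x_1$, and nothing in the hypotheses supplies such elements. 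So the proposal is a plan, not a proof.

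Moreover, this gap cannot be closed, because the statement itself is false; your unresolved mixed case is precisely where it breaks. Let $k$ be a field, $A=k[a,b,s]$, $\mathfrak{m}_1=(a,b,s)$, $\mathfrak{m}_2=(a-1,b,s)$, and let $R$ be the localization of $A$ at the multiplicative set $A\setminus(\mathfrak{m}_1\cup\mathfrak{m}_2)$, so that $Max(R)=\{\mathfrak{m}_1R,\mathfrak{m}_2R\}$ and $J(R)=\mathfrak{m}_1R\cap\mathfrak{m}_2R$. Set $I_1=sR$ and $I_2=(ab-s)R$. Both are prime ideals of $R$ (each quotient is a localization of $k[a,b]$, hence a domain) and both are contained in $J(R)$; a prime ideal $P\subseteq J(R)$ is automatically a $J$-ideal (if $xy\in P$ and $x\notin J(R)\supseteq P$, then $x\notin P$, so $y\in P$), hence a weakly $J$-ideal. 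Yet $I_1+I_2=(s,ab)R$ is not a weakly $J$-ideal: $0\neq ab\in I_1+I_2$ and $a\notin J(R)$ (if $a\in\mathfrak{m}_2R$ then $1=a-(a-1)\in\mathfrak{m}_2R$), but $b\notin I_1+I_2$, since a relation $b=\alpha s+\beta ab$ would, modulo $sR$, force $a$ to be a unit in the domain $R/sR$, whereas $a$ lies in its maximal ideal $\mathfrak{m}_1R/sR$. The example even survives your reduction: by Proposition \ref{f} (2) the images of $I_1,I_2$ in $R/(I_1\cap I_2)$ are weakly $J$-ideals with zero intersection, and if their sum were weakly $J$, Corollary \ref{c/} (3) would force $I_1+I_2$ to be weakly $J$. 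This also pinpoints the error in the paper's own proof: it transfers the weakly $J$ property across the $R$-module isomorphism $I_1/(I_1\cap I_2)\cong(I_1+I_2)/I_2$, but being a weakly $J$-ideal is a property of the pair (ideal, ambient ring), not of the module; in the example above, $I_1/(I_1\cap I_2)$ is weakly $J$ in $R/(I_1\cap I_2)$ while $(I_1+I_2)/I_2$ is not weakly $J$ in $R/I_2$. So your instinct that the mixed case required a genuine argument was sound --- no such argument exists.
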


\begin{proof}
Suppose that $I_{1}$ and $I_{2}$ are weakly $J$-ideals. Then $I_{1}+I_{2}$ is
proper by Lemma \ref{sup}. Since $I_{1}\cap I_{2}$ is a weakly $J$-ideal by
Proposition \ref{int}, then $I_{1}/(I_{1}\cap I_{2})$ is a weakly $J$-ideal of
$R/(I_{1}\cap I_{2})$ by Corollary \ref{c/} (1). From the isomorphism
$I_{1}/(I_{1}\cap I_{2})\cong(I_{1}+I_{2})/I_{2}$, we conclude that
$(I_{1}+I_{2})/I_{2}$ is a weakly $J$-ideal of $R/I_{2}$. Thus, $I_{1}+I_{2}$
is a weakly $J$-ideal of $R$ by Corollary \ref{c/} (3).
\end{proof}

Next, we generalize the concept of $J$-multiplicatively closed subset of a
ring $R$, \cite[Definition 2.27]{Hani}.

\begin{definition}
Let $S$ be a non empty subset of a ring $R$ such that $R-J(R)\subseteq S$.
Then $S$ is called weakly $J$-multiplicatively closed if $ab\in S$ or $ab=0$
for all $a\in R-J(R)$ and $b\in S$.
\end{definition}

Similar to the relation between $J$-ideals and $J$-multiplicatively closed
subsets of rings, we have:

\begin{proposition}
An ideal $I$ is a weakly $J$-ideal of a ring $R$ if and only if $R-I$ is a
weakly $J$-multiplicatively closed subset of $R$.
\end{proposition}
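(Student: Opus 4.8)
The plan is to verify that the two conditions are literally contrapositives of one another, so the proof reduces to unwinding the two definitions and checking the boundary requirements: non-emptiness of the subset, the containment $R-J(R)\subseteq S$, and the precise role of the hypothesis $ab\neq 0$. No clever construction is needed.

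For the forward direction I would assume $I$ is a weakly $J$-ideal and set $S=R-I$. First I would record that $S$ is non-empty because $I$ is proper (so $1\in S$), and that $R-J(R)\subseteq S$ because $I\subseteq J(R)$ by Proposition \ref{J(R)}; indeed this containment is exactly $I\subseteq J(R)$ rewritten. Then I would check the multiplicative condition: take $a\in R-J(R)$ and $b\in S$, i.e. $a\notin J(R)$ and $b\notin I$, and argue by contradiction. If we had $ab\neq 0$ and $ab\in I$, then $0\neq ab\in I$ with $a\notin J(R)$ would force $b\in I$ by the defining property of a weakly $J$-ideal, contradicting $b\in S$. Hence $ab=0$ or $ab\notin I$, that is $ab=0$ or $ab\in S$, which is exactly what the definition of a weakly $J$-multiplicatively closed subset demands.

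For the converse I would assume $S=R-I$ is weakly $J$-multiplicatively closed and verify the definition of a weakly $J$-ideal directly. Properness of $I$ is immediate, since $S$ is non-empty by hypothesis. Now suppose $0\neq ab\in I$ with $a\notin J(R)$, and suppose toward a contradiction that $b\notin I$, i.e. $b\in S$. Since $a\in R-J(R)$ and $b\in S$, the defining property yields $ab\in S$ or $ab=0$; but $ab\neq 0$, so $ab\in S=R-I$, contradicting $ab\in I$. Therefore $b\in I$, and $I$ is a weakly $J$-ideal.

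I do not expect any serious obstacle: the argument is pure definition-chasing. The only point needing care is the exact matching of the disjunction ``$ab\in S$ or $ab=0$'' with the ``$0\neq ab$'' clause in the weakly $J$-ideal definition — this alignment is precisely what makes the equivalence work in the \emph{weakly} (rather than ordinary) $J$-ideal setting — together with confirming that non-emptiness of $S$ corresponds to properness of $I$ and that the containment $R-J(R)\subseteq S$ corresponds to $I\subseteq J(R)$ from Proposition \ref{J(R)}.
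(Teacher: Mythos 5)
Your proof is correct and follows essentially the same route as the paper's: both directions are the same definition-unwinding contrapositive argument (if $ab\neq 0$ and $ab\in I$ with $a\notin J(R)$ and $b\notin I$, the weakly $J$-ideal property gives a contradiction, and conversely). The only difference is cosmetic: you explicitly record the non-emptiness of $R-I$ and the properness of $I$, which the paper leaves implicit.
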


\begin{proof}
If $I$ is a weakly $J$-ideal of $R$, then $I\subseteq J(R)$ and so
$R-J(R)\subseteq R-I$. Let $a\in R-J(R)$ and $b\in R-I$. If $ab=0$, then we
are done. Otherwise, suppose $ab\neq0$. Since $I$ is a weakly $J$-ideal, then
$ab\in R-I$ and so $R-I$ is a weakly $J$-multiplicatively closed subset of
$R$. Conversely, suppose $R-I$ is a $J$-multiplicatively closed subset of $R$.
Let $a,b\in R$ such that $0\neq ab\in I$ and $a\notin J(R)$. If $b\in R-I$,
Then $ab\in R-I$ as $R-I$ is a weakly $J$-multiplicatively closed subset. This
contradiction implies $b\in I$ and so $I$ is a weakly $J$-ideal of $R$.
\end{proof}

\begin{proposition}
Let $S$ be a weakly $J$-multiplicatively closed subset of a ring $R$ such that
$S\cap\bigcup\limits_{a\notin J(R)}(0:a)=\phi$. If an ideal $I$ of $R$ is
maximal with respect to the property $I\cap S=\phi$, then $I$ is a weakly
$J$-ideal of a ring $R$.
\end{proposition}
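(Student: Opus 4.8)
The plan is to exploit the two hypotheses on $S$ to run the classical ``ideal maximal with respect to missing a multiplicative set'' argument, the twist being that $S$ is only \emph{weakly} $J$-multiplicatively closed. First I would observe that $I$ is proper: since $J(R)$ is a proper ideal we have $1\in R-J(R)\subseteq S$, whence $1\in S$ and, because $I\cap S=\emptyset$, also $1\notin I$.

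The decisive preliminary step is to notice that the disjointness hypothesis $S\cap\bigcup_{a\notin J(R)}(0:a)=\emptyset$ promotes the weakly $J$-multiplicatively closed condition to honest closure on the region we care about. Indeed, fix $a\in R-J(R)$ and $s\in S$. The disjointness says $s\notin(0:a)$, so $as\neq0$; the weakly $J$-multiplicatively closed property then gives $as\in S$ or $as=0$, and the second alternative is now impossible. Hence $as\in S$ for every $a\notin J(R)$ and every $s\in S$.

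With this established, the main argument is by contradiction. Suppose $0\neq ab\in I$ with $a\notin J(R)$, and assume toward a contradiction that $b\notin I$. Then $I\subsetneq I+\langle b\rangle$, so maximality of $I$ among ideals disjoint from $S$ forces $(I+\langle b\rangle)\cap S\neq\emptyset$; pick $s\in S$ and write $s=i+rb$ with $i\in I$ and $r\in R$. Multiplying by $a$ yields $as=ai+r(ab)$, and since $ai\in I$ and $r(ab)\in I$ we get $as\in I$. But the preliminary step gives $as\in S$, contradicting $I\cap S=\emptyset$. Therefore $b\in I$, and $I$ is a weakly $J$-ideal of $R$.

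The only real obstacle is recognising the function of the technical disjointness condition: without it the defining clause of a weakly $J$-multiplicatively closed set leaves open the possibility $as=0$, which would void the final contradiction. Once one sees that disjointness is exactly what closes this escape hatch, everything else is routine. I would also remark that the argument never actually invokes $ab\neq0$, so under these hypotheses $I$ is in fact a $J$-ideal; recording only the weakly $J$-ideal conclusion keeps the statement parallel to the $J$-multiplicatively closed case of \cite{Hani}.
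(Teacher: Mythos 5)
Your proof is correct and is essentially the paper's argument: maximality forces an ideal strictly larger than $I$ to meet $S$, multiplication by $a$ then produces an element of $I\cap S$, and the disjointness hypothesis kills the escape hatch $as=0$. The only cosmetic differences are that the paper enlarges $I$ to $(I:a)$ (so $as\in I$ is immediate) where you use $I+\langle b\rangle$, and the paper runs the case split $as\in S$ or $as=0$ at the end rather than promoting $S$ to honest closure up front; your closing observation that the argument never uses $ab\neq 0$, so that $I$ is in fact a $J$-ideal, applies equally to the paper's own proof.
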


\begin{proof}
Suppose $I$ is not a weakly $J$-ideal of $R$. Then there are $a\notin J(R)$
and $b\notin I$ such that $ab\in I$ but $ab\neq0$. Since $I\varsubsetneq
(I:a)$, then $(I:a)\cap S\neq\phi$ and so there exists $s\in(I:a)\cap S$. Now,
$as\in I$ and since $S$ is weakly $J$-multiplicatively closed, we have either
$as\in S$ or $as=0$. If $as\in S$, then $S\cap I\neq\phi$, a contradiction. If
$as=0$, then $s\in S\cap\bigcup\limits_{a\notin J(R)}(0:a)$ which is also a
contradiction. Therefore, $I$ is a weakly $J$-ideal of a ring $R$.
\end{proof}

Next, we justify the relation between weakly $J$-ideals of a ring $R$ and
those of the idealization ring $R(+)M$.

\begin{theorem}
\label{6}Let $I$ be an ideal of a ring $R$ and $N$ be a submodule of an
$R$-module $M$.

\begin{enumerate}
\item If $I(+)N$ is a weakly $J$-ideal of the idealization ring $R(+)M$, then
$I$ is a weakly $J$-ideal of $R$.

\item $I(+)M$ is a weakly $J$-ideal of $R(+)M$ if and only if $I$ is a weakly
$J$-ideal of $R$ and for $x,y\in R$ with $xy=0$ but $x\notin J(R)$ and
$y\notin I$, $x,y\in Ann(M)$.
\end{enumerate}
\end{theorem}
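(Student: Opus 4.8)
The plan is to test the weakly $J$-ideal condition directly on the idealization, exploiting the identity $J(R(+)M)=J(R)(+)M$ recorded in the introduction and the fact that $I(+)M$ is automatically an ideal of $R(+)M$ since $IM\subseteq M$. The guiding principle is that a product $(a,m_1)(b,m_2)=(ab,am_2+bm_1)$ lies in $I(+)M$ exactly when $ab\in I$, it is nonzero exactly when $ab\neq 0$ or $am_2+bm_1\neq 0$, and membership $(a,m_1)\notin J(R)(+)M$ is controlled purely by $a\notin J(R)$. So each instance of the defining implication reduces to a statement about the $R$-components together with bookkeeping of the $M$-component.

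For part (1) I would take $a,b\in R$ with $0\neq ab\in I$ and $a\notin J(R)$, and feed the elements $(a,0)$ and $(b,0)$ into the hypothesis: their product is $(ab,0)\neq(0,0)$ and lies in $I(+)N$, while $(a,0)\notin J(R)(+)M$. The weakly $J$-ideal property of $I(+)N$ then forces $(b,0)\in I(+)N$, i.e. $b\in I$. The same choice of test elements (with $N=M$) immediately gives the forward implication of part (2), namely that $I$ is a weakly $J$-ideal of $R$. To produce the annihilator condition I would argue contrapositively: given $x,y\in R$ with $xy=0$, $x\notin J(R)$, $y\notin I$, suppose some $m\in M$ has $xm\neq 0$; then $(x,0)(y,m)=(0,xm)\neq(0,0)$ lies in $I(+)M$ with $(x,0)\notin J(R)(+)M$, so $(y,m)\in I(+)M$ and $y\in I$, a contradiction, whence $x\in Ann(M)$. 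Symmetrically, testing $(x,m)(y,0)=(0,ym)$ yields $y\in Ann(M)$.

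For the converse of part (2) I would first note that $I$ proper makes $I(+)M$ proper, and then take $(a,m_1),(b,m_2)$ with $(0,0)\neq(a,m_1)(b,m_2)\in I(+)M$ and $(a,m_1)\notin J(R)(+)M$; thus $ab\in I$ and $a\notin J(R)$, and it suffices to show $b\in I$. The argument splits on the $R$-component of the product. If $ab\neq 0$, then $0\neq ab\in I$ with $a\notin J(R)$ gives $b\in I$ directly from $I$ being a weakly $J$-ideal of $R$. The delicate case, and the main obstacle, is $ab=0$ with $am_2+bm_1\neq 0$: here I would assume for contradiction that $b\notin I$, apply the annihilator hypothesis to $x=a$ and $y=b$ to conclude $a,b\in Ann(M)$, and then observe that $am_2=0$ and $bm_1=0$ force $am_2+bm_1=0$, contradicting nonzeroness of the product. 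Hence $b\in I$ in every case, so $(b,m_2)\in I(+)M$. The only real subtlety throughout is keeping the nonzero hypothesis correctly aligned with the two coordinates, since it is precisely the $M$-component that must be annihilated in the degenerate case, which is exactly what the $Ann(M)$ condition guarantees.
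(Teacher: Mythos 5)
Your proposal is correct and follows essentially the same route as the paper's proof: testing with $(a,0)$, $(b,0)$ for part (1), deriving the annihilator condition by contradiction via products like $(x,0)(y,m)=(0,xm)$, and splitting the converse of (2) into the cases $ab\neq 0$ and $ab=0$, where the hypothesis $a,b\in \mathrm{Ann}(M)$ kills the $M$-component and contradicts nonzeroness. Your only departure is making explicit the symmetric computation $(x,m)(y,0)=(0,ym)$ for $y\in \mathrm{Ann}(M)$, which the paper leaves as ``similarly.''
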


\begin{proof}
(1) If $I=R$, then clearly $I(+)N=R(+)M$, a contradiction. Let $a,b\in R$ with
$0\neq ab\in I$ and $a\notin J(R)$. Then $(0,0)\neq(a,0)(b,0)\in I(+)N$ and
$(a,0)\notin J(R(+)M)=J(R)(+)M$. Since $I(+)N$ is a weakly $J$-ideal, we have
$(b,0)\in I(+)N$ and $b\in I$ as needed.

(2) Suppose $I(+)M$ is a weakly $J$-ideal. Then $I$ is so by (1). Now, for
$x,y\in R$, suppose $xy=0$ but $x\notin J(R)$ and $y\notin I$. If $x\notin
Ann(M)$, then there exists $m\in M$ such that $xm\neq0$. Hence, $(0,0)\neq
(x,0)(y,m)\in I(+)M$ but $(x,0)\notin J(R(+)M)$ and $(y,m)\notin I(+)M$, a
contradiction. Therefore, $x\in Ann(M)$. Similarly, we can prove that $y\in
Ann(M)$. Conversely, suppose $(0,0)\neq\left(  a,m_{1}\right)  \left(
b,m_{2}\right)  \in$ $I(+)M$ and $\left(  a,m_{1}\right)  \notin J(R(+)M)$ for
$\left(  a,m_{1}\right)  ,\left(  b,m_{2}\right)  \in$ $R(+)M$. Then $ab\in I$
and $a\notin J(R)$. If $ab\neq0$, then $b\in I$ as $I$ is a weakly $J$-ideal
and so $\left(  b,m_{2}\right)  \in I(+)M$. Suppose $ab=0$ but neither $a\in
J(R)$ nor $b\in I$. By assumption, $a,b\in Ann(M)$ and so $\left(
a,m_{1}\right)  \left(  b,m_{2}\right)  =(0,0)$, a contradiction.
\end{proof}

However, in general, if $I$ is a weakly $J$-ideal of $R$, then $I(+)M$ need
not be so. For example, although $0$ is a (weakly) $J$-ideal of $%
\mathbb{Z}
$, the ideal $0(+)\left\langle 4\right\rangle $ of $%
\mathbb{Z}
(+)%
\mathbb{Z}
$ is not a weakly $J$-ideal. Indeed, $(0,0)\neq(2,2)(0,2)\in0(+)\left\langle
4\right\rangle $ but $(2,2)\notin J(%
\mathbb{Z}
(+)%
\mathbb{Z}
)$ and $(0,2)\notin0(+)\left\langle 4\right\rangle $.

\section{(Weakly) $J$-ideals of Amalgamated Rings Along an Ideal}

Let $R$ and $S$ be two rings, $J$ be an ideal of $S$ and $f:R\rightarrow S$ be
a ring homomorphism. The set $R\Join^{f}J=\left\{  (r,f(r)+j):r\in R\text{,
}j\in J\right\}  $ is a subring of $R\times S$ (with identity element
$(1_{R},1_{S})$ ) called the amalgamation of $R$ and $S$ along $J$ with
respect to $f$. In particular, if $Id_{R}:R\rightarrow R$ is the identity
homomorphism on $R$, then $R\Join J=R\Join^{Id_{R}}J=\left\{  (r,r+j):r\in
R\text{, }j\in J\right\}  $ is the amalgamated duplication of a ring along an
ideal $J$. This construction has been first defined and studied by D'Anna and
Fontana, \cite{DAnna1}. Many properties of this ring have been investigated
and analyzed over the last two decades, see for example \cite{DAnna2},
\cite{DAnna3}.

Let $I$ be an ideal of $R$ and $K$ be an ideal of $f(R)+J$. Then $I\Join
^{f}J=\left\{  (i,f(i)+j):i\in I\text{, }j\in J\right\}  $ and $\bar{K}%
^{f}=\{(a,f(a)+j):a\in R$, $j\in J$, $f(a)+j\in K\}$ are ideals of $R\Join
^{f}J$, \cite{DAnna3}.

\begin{lemma}
\label{8}\cite{DAnna3}Let $R$, $S$, $J$ and $f$ be as above. The set of all
maximal ideals of $R\Join^{f}J$ is $Max(R\Join^{f}J)=\left\{  M\Join^{f}J:M\in
Max(R)\right\}  \cup\left\{  \bar{Q}^{f}:Q\in Max(S)\backslash V(J)\right\}  $
where $V(J)$ denotes the set of all prime ideals containing $J$.
\end{lemma}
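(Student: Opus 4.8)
The plan is to analyze the two canonical projections out of $A:=R\Join^{f}J\subseteq R\times S$ and to exploit that their kernels multiply to zero. Define $p_{R}:A\to R$ by $(r,f(r)+j)\mapsto r$ and $p_{S}:A\to S$ by $(r,f(r)+j)\mapsto f(r)+j$. Both are ring homomorphisms; $p_{R}$ is surjective with $\ker p_{R}=\{(0,j):j\in J\}=0\Join^{f}J$, while $p_{S}$ has image $f(R)+J$ and $\ker p_{S}=\{(r,0):f(r)\in J\}=f^{-1}(J)\times\{0\}$. The first observation I would record is that $(\ker p_{R})(\ker p_{S})=0$, since $(0,j)(r,0)=(0,0)$ for all $j\in J$ and $r\in f^{-1}(J)$.

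Next, since every maximal ideal $\mathfrak{m}$ of $A$ is prime and contains $(\ker p_{R})(\ker p_{S})=0$, primeness forces $\ker p_{R}\subseteq\mathfrak{m}$ or $\ker p_{S}\subseteq\mathfrak{m}$. This splits the argument into two cases. If $\ker p_{R}\subseteq\mathfrak{m}$, then $\mathfrak{m}$ corresponds to a maximal ideal of $A/\ker p_{R}\cong R$; since $p_{R}^{-1}(M)=M\Join^{f}J$, we get $\mathfrak{m}=M\Join^{f}J$ with $M\in Max(R)$, and conversely each such ideal is maximal because $A/(M\Join^{f}J)\cong R/M$ is a field. This produces the first family.

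For the second case, $\ker p_{S}\subseteq\mathfrak{m}$ gives a maximal ideal of $A/\ker p_{S}\cong T:=f(R)+J$, so $\mathfrak{m}=p_{S}^{-1}(\mathfrak{n})$ for some maximal ideal $\mathfrak{n}$ of $T$. Here I would distinguish according to whether $J\subseteq\mathfrak{n}$. If $J\subseteq\mathfrak{n}$, then $p_{S}^{-1}(\mathfrak{n})\supseteq p_{S}^{-1}(J)=f^{-1}(J)\Join^{f}J\supseteq\ker p_{R}$, so this $\mathfrak{m}$ already lies in the first family and need not be counted again; this is precisely the source of the restriction $Q\notin V(J)$. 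If $J\nsubseteq\mathfrak{n}$, I would use that $J$ is a common ideal of $T$ and $S$: picking $j_{0}\in J\setminus\mathfrak{n}$ and noting $Sj_{0}\subseteq T$, localizing at $T\setminus\mathfrak{n}$ identifies $T$ and $S$, yielding a bijection between maximal ideals of $T$ not containing $J$ and maximal ideals $Q$ of $S$ not containing $J$, via $\mathfrak{n}=Q\cap T$. Under this correspondence $p_{S}^{-1}(Q\cap T)=\bar{Q}^{f}$, and since $J+Q=S$ forces $T+Q=S$, we obtain $A/\bar{Q}^{f}\cong S/Q$, confirming maximality and identifying $\mathfrak{m}=\bar{Q}^{f}$ with $Q\in Max(S)\setminus V(J)$.

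The main obstacle I anticipate is the bookkeeping around the overlap of the two families: establishing cleanly that maximal ideals of $T=f(R)+J$ lying over $J$ reproduce members of the first family (so that $V(J)$ must be excluded to avoid double counting), and verifying the contraction bijection $\mathfrak{n}\leftrightarrow Q$ between $Max(T)\setminus V(J)$ and $Max(S)\setminus V(J)$ through the common ideal $J$ by the localization argument above. Once these identifications are pinned down, assembling the two cases yields the stated description of $Max(R\Join^{f}J)$.
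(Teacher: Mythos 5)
There is no in-paper argument to compare yours against: the paper states Lemma \ref{8} as a citation to \cite{DAnna3} and gives no proof. Judged on its own terms, your proof is correct and complete. The key steps all check out: $(\ker p_{R})(\ker p_{S})=0$, so every maximal (hence prime) ideal $\mathfrak{m}$ of $A=R\Join^{f}J$ contains $\ker p_{R}=\{0\}\times J$ or $\ker p_{S}=f^{-1}(J)\times\{0\}$; in the first case $\mathfrak{m}=p_{R}^{-1}(M)=M\Join^{f}J$ for some $M\in Max(R)$, and conversely $A/(M\Join^{f}J)\cong R/M$ shows each such ideal is maximal; in the second case $\mathfrak{m}=p_{S}^{-1}(\mathfrak{n})$ for some $\mathfrak{n}\in Max(T)$ with $T=f(R)+J$, and your dichotomy on whether $J\subseteq\mathfrak{n}$ is exactly right: when $J\subseteq\mathfrak{n}$ one has $\ker p_{R}\subseteq p_{S}^{-1}(J)\subseteq\mathfrak{m}$, so $\mathfrak{m}$ falls back into the first family (this is indeed why $V(J)$ is excluded), while when $J\nsubseteq\mathfrak{n}$ the common-ideal localization $T_{j_{0}}=S_{j_{0}}$ for $j_{0}\in J\setminus\mathfrak{n}$ (using $Sj_{0}\subseteq J\subseteq T$) produces the unique prime $Q$ of $S$ with $Q\cap T=\mathfrak{n}$, $J\nsubseteq Q$, and $p_{S}^{-1}(Q\cap T)=\bar{Q}^{f}$. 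The one point that deserves to be written out carefully, and which you do flag, is that contraction preserves maximality in both directions: $J\nsubseteq Q$ together with maximality (of $Q$ in $S$, or of $\mathfrak{n}$ in $T$) gives $1\in J+Q$, hence $T+Q=S$ and $T/(Q\cap T)\cong S/Q$, so $\mathfrak{n}$ is maximal in $T$ iff $Q$ is maximal in $S$, and then $A/\bar{Q}^{f}\cong S/Q$ is a field. Compared with the cited source, where the maximal spectrum is extracted from a description of all of $Spec(R\Join^{f}J)$ developed through the pullback presentation of the amalgamation, your argument is more elementary and targets the maximal ideals directly; what it buys is self-containment, at the cost of not yielding the full prime spectrum along the way.
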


In particular if $J\subseteq J(S)$ (e.g. $S$ is quasi-local or $J$ is a weakly
$J$-ideal), then we conclude from Lemma \ref{8} that $J(R\Join^{f}%
J)=J(R)\Join^{f}J$. The proof of the following proposition is straightforward
by using Theorem \ref{quasi}.

\begin{proposition}
\label{pp}Consider the amalgamation of\ rings $R$ and $S$ along the ideal $J$
of $S$ with respect to a homomorphism $f$. If $R$ is a quasi-local ring and
$J\subseteq J(S)$, then every ideal of $R\Join^{f}J$ is a (weakly) $J$-ideal.
\end{proposition}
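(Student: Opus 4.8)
The plan is to reduce Proposition \ref{pp} directly to the characterization of quasi-local rings given in Theorem \ref{quasi}. The target claim is that if $R$ is quasi-local and $J \subseteq J(S)$, then every proper ideal of the amalgamation $R \Join^f J$ is a weakly $J$-ideal. By Theorem \ref{quasi}, the property ``every proper ideal is a (weakly) $J$-ideal'' is equivalent to the ambient ring being quasi-local. So it suffices to show that $R \Join^f J$ is itself quasi-local, and then invoke the implication $(1) \Rightarrow (4)$ (or $(1) \Rightarrow (2)$) of that theorem applied to the ring $R \Join^f J$.

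The key computation is therefore to verify that $R \Join^f J$ has a unique maximal ideal under the stated hypotheses. First I would use Lemma \ref{8}, which lists $\mathrm{Max}(R \Join^f J)$ as the union of $\{M \Join^f J : M \in \mathrm{Max}(R)\}$ and $\{\bar{Q}^f : Q \in \mathrm{Max}(S) \backslash V(J)\}$. Since $R$ is quasi-local, $R$ has a unique maximal ideal, say $\mathfrak{m}$, so the first family collapses to the single ideal $\mathfrak{m} \Join^f J$. For the second family, the hypothesis $J \subseteq J(S)$ forces every maximal ideal $Q$ of $S$ to contain $J$, i.e. $Q \in V(J)$, so $\mathrm{Max}(S) \backslash V(J) = \emptyset$ and the second family is empty. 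Hence $\mathrm{Max}(R \Join^f J) = \{\mathfrak{m} \Join^f J\}$ is a singleton, which is exactly the statement that $R \Join^f J$ is quasi-local.

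With quasi-locality of $R \Join^f J$ established, the conclusion follows immediately: by the equivalence $(1) \Leftrightarrow (4)$ of Theorem \ref{quasi} every proper ideal of $R \Join^f J$ is a weakly $J$-ideal, and by $(1) \Leftrightarrow (2)$ every proper ideal is even a $J$-ideal, which accounts for the parenthetical ``(weakly)'' in the statement. This is why the authors remark that the proof is straightforward.

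The only genuine obstacle is the bookkeeping around the condition $J \subseteq J(S)$, which is what kills the second family of maximal ideals in Lemma \ref{8}; one must be careful that $J(S)$ is the intersection of all maximal ideals of $S$, so $J \subseteq J(S)$ indeed implies $J \subseteq Q$ for every $Q \in \mathrm{Max}(S)$, giving $\mathrm{Max}(S) = V(J) \cap \mathrm{Max}(S)$ and thus $\mathrm{Max}(S) \backslash V(J) = \emptyset$. Everything else is a direct appeal to the already-proved Theorem \ref{quasi} and Lemma \ref{8}, so I would not expect any serious difficulty beyond confirming that $\mathfrak{m} \Join^f J$ is proper (which holds since it clearly does not contain the identity $(1_R, 1_S)$ as $\mathfrak{m} \neq R$).
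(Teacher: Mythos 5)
Your proposal is correct and follows exactly the route the paper intends: the remark preceding the proposition uses Lemma \ref{8} to analyze $Max(R\Join^{f}J)$ under the hypothesis $J\subseteq J(S)$, and the proposition is then declared ``straightforward by using Theorem \ref{quasi}'' --- precisely your argument that $Max(S)\backslash V(J)=\emptyset$ and $Max(R)=\{\mathfrak{m}\}$ force $R\Join^{f}J$ to be quasi-local, after which Theorem \ref{quasi} applies. Your extra check that $\mathfrak{m}\Join^{f}J$ is proper is harmless but redundant, since Lemma \ref{8} already asserts it is maximal.
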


Next, we give a characterization of (weakly) $J$-ideals of the form
$I\Join^{f}J$ and $\bar{K}^{f}$ of the amalgamation $R\Join^{f}J$ when
$J\subseteq J(S)$.

\begin{theorem}
\label{9}Consider the amalgamation of rings $R$ and $S$ along the ideals $J$
of $S$ with respect to a homomorphism $f$. Let $I$ be an ideal of $R$. Then

\begin{enumerate}
\item If $I\Join^{f}J$ is a $J$-ideal of $R\Join^{f}J$, then $I$ is a
$J$-ideal of $R$. Moreover, the converse is true if $J\subseteq J(S)$.

\item If $I\Join^{f}J$ is a weakly $J$-ideal of $R\Join^{f}J$, then $I$ is a
weakly $J$-ideal of $R$ and for $a,b\in R$ with $ab=0$, but $a\notin J(R)$,
$b\notin I$, then $f(a)j+f(b)i+ij=0$ for every $i,j\in J$. Moreover, the
converse is true if $J\subseteq J(S)$.
\end{enumerate}
\end{theorem}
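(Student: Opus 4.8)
The plan is to test the (weakly) $J$-ideal condition on judiciously chosen elements of $R\Join^{f}J$, controlling the Jacobson radical by means of Lemma~\ref{8}. The structural fact I would record first is that
$J(R\Join^{f}J)=\bigcap_{M\in \mathrm{Max}(R)}(M\Join^{f}J)\cap\bigcap_{Q}\bar{Q}^{f}\subseteq J(R)\Join^{f}J$ always holds, with equality exactly when $J\subseteq J(S)$ (the factors $\bar{Q}^{f}$ only shrink the intersection). The consequence I will use repeatedly is: for any $a\in R$ and $i\in J$, the element $(a,f(a)+i)$ lies in $J(R)\Join^{f}J$ iff $a\in J(R)$. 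Hence $a\notin J(R)$ forces $(a,f(a)+i)\notin J(R\Join^{f}J)$ unconditionally, whereas to deduce $a\notin J(R)$ from $(a,f(a)+i)\notin J(R\Join^{f}J)$ I need the equality, i.e.\ $J\subseteq J(S)$. This explains precisely why the forward implications require no hypothesis on $J$ while the converses do.

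For the forward direction of both parts I would take $a,b\in R$ with the relevant product in $I$ (nonzero in part (2)) and $a\notin J(R)$, and feed $(a,f(a))(b,f(b))=(ab,f(ab))$ into the hypothesis on $I\Join^{f}J$: the radical inclusion gives $(a,f(a))\notin J(R\Join^{f}J)$, and since $(ab,f(ab))\in I\Join^{f}J$ I conclude $(b,f(b))\in I\Join^{f}J$, that is $b\in I$ (for part (1) this works verbatim, with $ab$ allowed to be $0$). The supplementary assertion of part (2) is obtained by instead plugging in $(a,f(a)+i)(b,f(b)+j)$ in the case $ab=0$, $a\notin J(R)$, $b\notin I$: since $f(ab)=0$ the product collapses to $(0,\,f(a)j+f(b)i+ij)\in 0\Join^{f}J\subseteq I\Join^{f}J$, and were this nonzero the weakly $J$-ideal property together with $(a,f(a)+i)\notin J(R\Join^{f}J)$ would force $b\in I$, a contradiction; hence $f(a)j+f(b)i+ij=0$ for all $i,j\in J$.

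For the converses I assume $J\subseteq J(S)$, so $J(R\Join^{f}J)=J(R)\Join^{f}J$. Given $(0,0)\neq(a,f(a)+i)(b,f(b)+j)\in I\Join^{f}J$ with the first factor outside $J(R\Join^{f}J)$, the equality of radicals yields $a\notin J(R)$, and the first coordinate gives $ab\in I$. If $ab\neq 0$, the weakly $J$-ideal (resp.\ $J$-ideal) property of $I$ gives $b\in I$, hence $(b,f(b)+j)\in I\Join^{f}J$. The delicate case is $ab=0$: arguing by contradiction, suppose $b\notin I$; hypothesis (b) then forces $f(a)j+f(b)i+ij=0$ for the very $i,j$ at hand, so the full product equals $(ab,\,f(ab)+f(a)j+f(b)i+ij)=(0,0)$, contradicting the nonzero assumption. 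I expect this last collapse --- matching the algebraic condition (b) exactly against the second coordinate of the product --- to be the main point to get right; everything else is coordinate bookkeeping in $R\times S$ combined with the two radical inclusions recorded at the outset.
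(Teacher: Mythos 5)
Your proposal is correct and takes essentially the same route as the paper's proof: both test the (weakly) $J$-ideal condition on the elements $(a,f(a))$ and on products $(a,f(a)+i)(b,f(b)+j)$, using Lemma \ref{8} to get the unconditional inclusion $J(R\Join^{f}J)\subseteq J(R)\Join^{f}J$ for the forward implications and the equality $J(R\Join^{f}J)=J(R)\Join^{f}J$ (valid when $J\subseteq J(S)$) for the converses, with the same case split $ab\neq 0$ versus $ab=0$ and the same collapse of the second coordinate via the condition $f(a)j+f(b)i+ij=0$. The only difference is presentational: you isolate the radical dichotomy explicitly at the outset, where the paper argues it inline.
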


\begin{proof}
(1) Suppose $I\Join^{f}J$ is a $J$-ideal of $R\Join^{f}J$. Let $a,b\in R$ such
that $ab\in I$ and $a\notin J(R)$. Then $(a,f(a))(b,f(b))\in I\Join^{f}J$ and
$(a,f(a))\notin J(R\Join^{f}J)$ since otherwise $a\in M$ for each $M\in
Max(R)$ by Lemma \ref{8}, a contradiction. It follows that $(b,f(b))\in
I\Join^{f}J$ and so $b\in I$ as needed.

Moreover, suppose $J\subseteq J(S)$ and $I$ is a a $J$-ideal of $R$. Let
$(a,f(a)+j_{1})(b,f(b)+j_{2})=(ab,$ $f(ab)+f(a)j_{2}+f(b)j_{1}+j_{1}j_{2})\in
I\Join^{f}J$ for $(a,f(a)+j_{1}),(b,f(b)+j_{1})\in R\Join^{f}J$. If
$(a,f(a)+j_{1})\notin J(R\Join^{f}J)=J(R)\Join^{f}J$, then $a\notin J(R)$.
Since $ab\in I$, we conclude that $b\in I$ and so $(b,f(b)+j_{2})\in
I\Join^{f}J$. Thus, $I\Join^{f}J$ is a $J$-ideal of $R\Join^{f}J$.

(2) Suppose $I\Join^{f}J$ is a weakly $J$-ideal of $R\Join^{f}J$ and let
$a,b\in R$ such that $0\neq ab\in I$ and $a\notin J(R)$. Then $(0,0)\neq
(a,f(a))(b,f(b))\in I\Join^{f}J$ and $(a,f(a))\notin J(R\Join^{f}J)$ by Lemma
\ref{8}. It follows that $(b,f(b))\in I\Join^{f}J$ and so $b\in I$. For the
second claim, suppose there exist $i,j\in J$ such whenever $a,b\in R$ with
$ab=0$, but $a\notin J(R)$, $b\notin I$, $f(a)j+f(b)i+ij\neq0$. Then
$(0,0)\neq(a,f(a)+i)(b,f(b)+j)=$
$(ab,f(ab)+f(a)j+f(b)i+ij)=(0,f(a)j+f(b)i+ij)\in I\Join^{f}J$. This is a
contradiction since $I\Join^{f}J$ is a weakly $J$-ideal, $(a,f(a)+i)\notin
J(R\Join^{f}J)$ and $(b,f(b)+j)\notin I\Join^{f}J$. Now, we prove the converse
under the assumption $J\subseteq J(S)$. Let $0\neq(a,f(a)+j_{1})(b,f(b)+j_{2}%
)=(ab,f(ab)+f(a)j_{2}+f(b)j_{1}+j_{1}j_{2})\in I\Join^{f}J$ where
$(a,f(a)+j_{1})\notin J(R\Join^{f}J)=J(R)\Join^{f}J$. Then $ab\in I$ and we
have two cases:

\textbf{Case I:} If $ab\neq0$, then as clearly $a\notin J(R)$, we have $b\in
I$. Therefore, $(b,f(b)+j_{2})\in I\Join^{f}J$ and $I\Join^{f}J$ is a weakly
$J$-ideal of $R\Join^{f}J$.

\textbf{Case II:} Suppose $ab=0$. If $a\notin J(R)$ and $b\notin I$, then by
assumption, $f(a)j+f(b)i+ij=0$ for every $i,j\in J$. This implies
$(a,f(a)+j_{1})(b,f(b)+j_{2})=(0,0)$, a contradiction. Thus, $a\in J(R)$ or
$b\in I$ and so $(a,f(a)+j_{1})\in J(R\Join^{f}J)$ or $(b,f(b)+j_{2})\in
I\Join^{f}J$ as required.
\end{proof}

\begin{corollary}
\label{cj}Consider the amalgamation of rings $R$ and $S$ along the ideal
$J\subseteq J(S)$ of $S$ with respect to a homomorphism $f$. The $J$-ideals of
$R\Join^{f}J$ containing $\{0\}\times J$ are of the form $I\Join^{f}J$ where
$I$ is a $J$-ideal of $R.$
\end{corollary}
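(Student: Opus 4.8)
The plan is to recognize this as essentially a corollary of Theorem \ref{9}(1), so the only genuine content to establish is the \emph{structural} fact that every ideal of $R\Join^{f}J$ containing $\{0\}\times J$ is automatically of the form $I\Join^{f}J$ for some ideal $I$ of $R$. Once that normal form is in hand, the $J$-ideal correspondence is read off directly from Theorem \ref{9}(1) together with the hypothesis $J\subseteq J(S)$.

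First I would take an arbitrary $J$-ideal $\mathcal{I}$ of $R\Join^{f}J$ with $\{0\}\times J\subseteq\mathcal{I}$ and define $I=\{r\in R:(r,f(r)+j)\in\mathcal{I}\text{ for some }j\in J\}$. Since $I$ is the image of $\mathcal{I}$ under the surjective ring homomorphism $\pi_{1}\colon R\Join^{f}J\to R$, $(r,f(r)+j)\mapsto r$, it is an ideal of $R$. The inclusion $\mathcal{I}\subseteq I\Join^{f}J$ is immediate, since any element of $\mathcal{I}$ has the shape $(r,f(r)+j)$ and then $r\in I$ by definition. For the reverse inclusion $I\Join^{f}J\subseteq\mathcal{I}$, take $(i,f(i)+j)$ with $i\in I$ and $j\in J$; by definition of $I$ there is $j'\in J$ with $(i,f(i)+j')\in\mathcal{I}$, and then $(i,f(i)+j)=(i,f(i)+j')+(0,j-j')$ lies in $\mathcal{I}$ because $(0,j-j')\in\{0\}\times J\subseteq\mathcal{I}$. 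This is exactly where the containment hypothesis $\{0\}\times J\subseteq\mathcal{I}$ is used, and it is the one step that is not purely formal, so I expect it to be the main (though modest) obstacle. Hence $\mathcal{I}=I\Join^{f}J$.

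With the normal form established, Theorem \ref{9}(1) gives that $I$ is a $J$-ideal of $R$ (as $\mathcal{I}=I\Join^{f}J$ is a $J$-ideal of $R\Join^{f}J$), proving that every such $\mathcal{I}$ has the asserted form. Conversely, if $I$ is any $J$-ideal of $R$, then the converse half of Theorem \ref{9}(1) (which requires precisely $J\subseteq J(S)$) shows $I\Join^{f}J$ is a $J$-ideal of $R\Join^{f}J$, and it clearly contains $\{0\}\times J$ (take $i=0$); it is proper since $I\Join^{f}J=R\Join^{f}J$ would force $I=R$, contradicting that a $J$-ideal is proper. Thus the $J$-ideals of $R\Join^{f}J$ containing $\{0\}\times J$ are exactly those of the form $I\Join^{f}J$ with $I$ a $J$-ideal of $R$, completing the proof.
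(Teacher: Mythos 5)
Your proof is correct, and its skeleton matches the paper's: both directions hinge on Theorem \ref{9}(1), and both extract $I$ as the image of the given ideal under the projection $(a,f(a)+j)\mapsto a$. The one genuine difference is how you certify that $I$ is a $J$-ideal of $R$. The paper applies the cited result \cite[Proposition 2.23 (1)]{Hani} (the $J$-ideal analogue of Proposition \ref{f}(2): a surjective image of a $J$-ideal containing the kernel is a $J$-ideal) to $\varphi(K)$, and then asserts the decomposition $K=I\Join^{f}J$ in one terse sentence. You instead prove the decomposition $\mathcal{I}=I\Join^{f}J$ first, in full detail --- the translation-by-$(0,j-j')$ argument, which is exactly where $\{0\}\times J\subseteq\mathcal{I}$ enters --- and only then invoke the forward implication of Theorem \ref{9}(1), which needs no hypothesis on $J$, to conclude $I$ is a $J$-ideal. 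Your route is thus self-contained within the present paper (no appeal to the external homomorphic-image result), and it makes explicit the structural step the paper leaves implicit; the paper's route buys brevity by outsourcing that work to the general statement about images of $J$-ideals. You also record the small properness check ($I\Join^{f}J$ proper because $I$ is), which the paper omits. Both arguments use the hypothesis $J\subseteq J(S)$ in precisely one place: the converse half, where a $J$-ideal $I$ of $R$ must produce a $J$-ideal $I\Join^{f}J$ of $R\Join^{f}J$.
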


\begin{proof}
First, we note that $I\Join^{f}J$ is a $J$-ideal of $R\Join^{f}J$ for any
$J$-ideal $I$ of $R$ by Theorem \ref{9}. Let $K$ be a $J$-ideal of $R\Join
^{f}J$ containing $\{0\}\times J.$ Consider the surjective homomorphism
$\varphi:R\Join^{f}J\rightarrow R$ defined by $\varphi(a,f(a)+j)=a$ for all
$(a,f(a)+j)\in R\Join^{f}J$. Since $Ker(\varphi)=\{0\}\times J\subseteq K$,
then $I:=\varphi(K)$ is a $J$-ideal of $R$ by \cite[Proposition 2.23
(1)]{Hani} . Since $\{0\}\times J\subseteq K$, we conclude that $K$ is of the
form $I\Join^{f}J$.
\end{proof}

As a particular case of Theorem \ref{9}, we have the following immediate corollary.

\begin{corollary}
\label{11}Let $R$ be a ring and $I$, $J$ be be proper ideals of $R$. Then

\begin{enumerate}
\item If $I\Join J$ is a $J$-ideal of $R\Join J$, then $I$ is a $J$-ideal of
$R$. Moreover, the converse is true if $J\subseteq J(R)$.

\item If $I\Join J$ is a weakly $J$-ideal of $R\Join J$, then $I$ is a weakly
$J$-ideal of $R$ and for $a,b\in R$ with $ab=0$, but $a\notin J(R)$, $b\notin
I$, then $aj+bi+ij=0$ for every $i,j\in J$. Moreover, the converse is true if
$J\subseteq J(R)$.
\end{enumerate}
\end{corollary}

\begin{theorem}
\label{10}Consider the amalgamation of \ rings $R$ and $S$ along the maximal
ideal $J$ of $S$ with respect to an epimorphism $f$. Let $K$ be an ideal of
$S$.

\begin{enumerate}
\item If $\bar{K}^{f}$ is a $J$-ideal of $R\Join^{f}J$, then $K$ is a a
$J$-ideal of $S$. The converse is true if $f(J(R))=J(S)+J$ and
$Ker(f)\subseteq J(R)$.

\item If $\bar{K}^{f}$ is a weakly $J$-ideal of $R\Join^{f}J$, then $K$ is a
weakly $J$-ideal of $S$ and when $f(a)+j\notin J(S),f(b)+k\notin K$ with
$a,b\in R$, $j,k\in J$ and $(f(a)+j)(f(b)+k)=0$, then $ab=0$. The converse is
true if $f(J(R))=J(S)+J$ and $Ker(f)\subseteq J(R)$.
\end{enumerate}
\end{theorem}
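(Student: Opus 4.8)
The plan is to reduce the whole statement to the projection homomorphism. Define $\psi:R\Join^{f}J\to S$ by $\psi(a,f(a)+j)=f(a)+j$; this is the restriction of the canonical projection $R\times S\to S$, so it is a ring homomorphism, and it is surjective because $f$ is an epimorphism (so $f(R)+J=S$). The key structural observation is that $\bar{K}^{f}=\psi^{-1}(K)$ straight from the definition of $\bar{K}^{f}$, whence $Ker(\psi)\subseteq\bar{K}^{f}$ (as $0\in K$) and $\psi(\bar{K}^{f})=K$. With this identification the forward implications become transparent. Since $\psi$ is a surjection carrying the Jacobson radical into the Jacobson radical by \cite[Lemma 2.22]{Hani}, we have $\psi(J(R\Join^{f}J))\subseteq J(S)$, so $\psi(\alpha)\notin J(S)$ forces $\alpha\notin J(R\Join^{f}J)$. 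For part (1) this is exactly the setting of \cite[Proposition 2.23 (1)]{Hani}, which gives that $K=\psi(\bar{K}^{f})$ is a $J$-ideal of $S$; for the first claim of part (2), Proposition \ref{f} (2) gives that $K$ is a weakly $J$-ideal of $S$.

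The extra condition in the forward part of (2) I would check by hand: given $a,b\in R$, $j,k\in J$ with $s:=f(a)+j\notin J(S)$, $t:=f(b)+k\notin K$ and $st=0$, set $\alpha=(a,f(a)+j)$ and $\beta=(b,f(b)+k)$. If $ab\neq0$ then $\alpha\beta=(ab,0)\neq(0,0)$ lies in $\bar{K}^{f}$ (as $\psi(\alpha\beta)=0\in K$) while $\alpha\notin J(R\Join^{f}J)$, so the weakly $J$-ideal property forces $\beta\in\bar{K}^{f}$, i.e.\ $t\in K$, a contradiction; hence $ab=0$.

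For the converses, where we assume $f(J(R))=J(S)+J$ and $Ker(f)\subseteq J(R)$, the heart of the matter is the comparison $\alpha\in J(R\Join^{f}J)\Leftrightarrow\psi(\alpha)\in J(S)$. First I note that $J(S)\subseteq J$ because $J$ is maximal, so the hypothesis simply reads $f(J(R))=J$. Using Lemma \ref{8} (with $V(J)=\{J\}$ since $J$ is maximal) I would write $J(R\Join^{f}J)=(J(R)\Join^{f}J)\cap\bigcap_{Q\in Max(S)\setminus\{J\}}\bar{Q}^{f}$, so that $(a,f(a)+j)\in J(R\Join^{f}J)$ iff $a\in J(R)$ and $f(a)+j\in L:=\bigcap_{Q\neq J}Q$. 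As $J(S)=L\cap J$, the direction needed for the converse is $\psi^{-1}(J(S))\subseteq J(R\Join^{f}J)$: if $s=f(a)+j\in J(S)$ then $s\in L$, and $s\in J$ with $f(J(R))=J$, $Ker(f)\subseteq J(R)$ forces $a\in J(R)$ (indeed $f(a)=s-j\in J=f(J(R))$, so $a$ differs from an element of $J(R)$ by an element of $Ker(f)\subseteq J(R)$).

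Once this Jacobson correspondence is in place the converses are short. Given $\alpha\beta\in\bar{K}^{f}$ with $\alpha\notin J(R\Join^{f}J)$, set $s=\psi(\alpha)$ and $t=\psi(\beta)$, so $st\in K$ and $s\notin J(S)$. For (1), $K$ being a $J$-ideal gives $t\in K$, hence $\beta\in\bar{K}^{f}$. For (2), if $st\neq0$ the weakly $J$-ideal property of $K$ gives $t\in K$; if $st=0$ then $\alpha\beta=(ab,0)\neq(0,0)$ forces $ab\neq0$, and the assumed condition (read contrapositively) yields $t\in K$. In every case $\beta\in\bar{K}^{f}$. The main obstacle is precisely the containment $\psi^{-1}(J(S))\subseteq J(R\Join^{f}J)$, which is where both hypotheses $f(J(R))=J(S)+J$ and $Ker(f)\subseteq J(R)$ are consumed; everything else is bookkeeping through $\psi$.
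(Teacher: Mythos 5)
Your proof is correct, and its forward half takes a genuinely different route from the paper. Your structural identity $\bar{K}^{f}=\psi^{-1}(K)$ for the projection $\psi:R\Join^{f}J\rightarrow S$, $(a,f(a)+j)\mapsto f(a)+j$ (surjective since $f(R)+J=S$, with $Ker(\psi)\subseteq\bar{K}^{f}$ and $\psi(\bar{K}^{f})=K$) lets you quote the transfer results for surjections --- \cite[Proposition 2.23 (1)]{Hani} for part (1) and Proposition \ref{f} (2) for part (2) --- together with the general fact $\psi(J(R\Join^{f}J))\subseteq J(S)$, so the forward implications come essentially for free and, notably, without consuming the maximality of $J$. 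The paper instead argues element-wise: it forms $(a,f(a))(b,f(b))\in\bar{K}^{f}$ and rules out $(a,f(a))\in J(R\Join^{f}J)$ via Lemma \ref{8}, using maximality through $f^{-1}(J)\in Max(R)$. For the converses the two proofs coincide in substance: everything hinges on the containment $\psi^{-1}(J(S))\subseteq J(R\Join^{f}J)$ under the hypotheses $f(J(R))=J(S)+J$ and $Ker(f)\subseteq J(R)$. The paper proves this inline, twice (once in each part), by contradiction (``suppose $f(a)+j\in J(S)$, then $f(a)\in J(S)+J=f(J(R))$, so $a\in J(R)$\ldots''), whereas you isolate it once via $V(J)=\{J\}$ and the description $J(R\Join^{f}J)=(J(R)\Join^{f}J)\cap\bigcap_{Q\in Max(S)\setminus\{J\}}\bar{Q}^{f}$ extracted from Lemma \ref{8}; the element-level step ($s\in L$, and $f(a)=s-j\in J=f(J(R))$ forces $a\in J(R)$ because $Ker(f)\subseteq J(R)$) is identical to the paper's. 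What your organization buys is economy and a sharper accounting of exactly where each hypothesis is used (in particular, that maximality of $J$ matters only for the converses); what the paper's buys is self-containedness, since it never needs to introduce $\psi$ or verify that the homomorphism transfer results apply.
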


\begin{proof}
(1) Suppose $\bar{K}^{f}$ is a $J$-ideal of $R\Join^{f}J$. Let $x,y\in S$,
say, $x=f(a)$ and $y=f(b)$ for $a,b\in R$. Suppose $xy\in K$ and $x\notin
J(S)$. Then $(a,f(a)),(b,f(b))\in R\Join^{f}J$ such that
$(a,f(a))(b,f(b))=(ab,(f(a))(f(b)))\in\bar{K}^{f}$. If $(a,f(a))\in
J(R\Join^{f}J)$, then $(a,f(a))\in\bar{Q}^{f}$ for all $Q\in Max(S)\backslash
V(J)$. Moreover, since $J$ is maximal in $S$, then $f^{-1}(J)$ is maximal in
$R$ and so $(a,f(a))\in f^{-1}(J)\Join^{f}J$. Thus, $f(a)\in J=V(J)$ and so
$f(a)\in Q$ for all $Q\in Max(S)$, a contradiction. Therefore, $(a,f(a))\notin
J(R\Join^{f}J)$ and so $(b,f(b))\in\bar{K}^{f}$ as $\bar{K}^{f}$ is a
$J$-ideal of $R\Join^{f}J$. Hence, $y=f(b)\in K$ and we are done.

Now, suppose $f(J(R))=J(S)+J$, $Ker(f)\subseteq J(R)$ and $K$ is a $J$-ideal
of $S$. Let $(a,f(a)+j),(b,f(b)+k)\in R\Join^{f}J$ such that
$(a,f(a)+j)(b,f(b)+k)=(ab,(f(a)+j)(f(b)+k))\in\bar{K}^{f}$ and
$(a,f(a)+j)\notin J(R\Join^{f}J)$. We claim that $f(a)+j\notin J(S)$. Suppose
not. Then $f(a)\in J(S)+J=f(J(R))$ and so $a\in J(R)$ as $Ker(f)\subseteq
J(R)$. Thus, $(a,f(a)+j)\in\left\{  M\Join^{f}J:M\in Max(R)\right\}  $.
Moreover, $f(a)+j\in Q$ for all $Q\in Max(S)$ implies that $(a,f(a)+j)\in
\left\{  \bar{Q}^{f}:Q\in Max(S)\backslash V(J)\right\}  $. It follows by
Lemma \ref{8} that $(a,f(a)+j)\in J(R\Join^{f}J)$, a contradiction. Since
$(f(a)+j)(f(b)+k)\in K$ and $K$ is a $J$-ideal of $S$, then $f(b)+k\in K$.
Hence, $(b,f(b)+k)\in\bar{K}^{f}$ and the result follows.

(2) Suppose $\bar{K}^{f}$ is a weakly $J$-ideal of $R\Join^{f}J$. Let
$x=f(a),y=f(b)\in S$ for $a,b\in R$ such that $0\neq xy\in K$ and $x\notin
J(S)$. Then $(0,0)\neq(a,f(a))(b,f(b))=(ab,f(ab))$ and similar to the proof of
(1), we have $(a,f(a))\notin J(R\Join^{f}J)$. By assumption, we have
$(b,f(b))\in\bar{K}^{f}$ and so $y=f(b)\in K$. Moreover, let $a,b\in R$ and
$j,k\in J$ such that $f(a)+j\notin J(S),f(b)+k\notin K$ and
$(f(a)+j)(f(b)+k)=0$. Suppose $ab\neq0$. Then $(0,0)\neq
(a,f(a)+j)(b,f(b)+k)=(ab,0)\in\bar{K}^{f}$. Similar to the proof of (1), we
conclude that $(a,f(a)+j)\notin J(R\Join^{f}J)$ and $(b,f(b)+k)\notin\bar
{K}^{f}$ which contradict the assumption that $\bar{K}^{f}$ is a weakly
$J$-ideal of $R\Join^{f}J$. Thus, $ab=0$ as needed.

Now, we assume $f(J(R))=J(S)+J$, $Ker(f)\subseteq J(R)$ and $K$ a weakly
$J$-ideal of $S$. Let $(a,f(a)+j),(b,f(b)+k)\in R\Join^{f}J$ such that
$(0,0)\neq(a,f(a)+j)(b,f(b)+k)=(ab,(f(a)+j)(f(b)+k))\in\bar{K}^{f}$ and
$(a,f(a)+j)\notin J(R\Join^{f}J)$. Then $f(a)+j\notin J(S)$ as in the proof of
(1) and $(f(a)+j)(f(b)+k)\in K$. We have two cases:

\textbf{Case I:} $(f(a)+j)(f(b)+k)\neq0$. In this case we conclude directly
that $(f(b)+k)\in K$. Thus, $(b,f(b)+k)\in\bar{K}^{f}$ and $\bar{K}^{f}$ is a
weakly $J$-ideal of $R\Join^{f}J$.

\textbf{Case II:} $(f(a)+j)(f(b)+k)=0$. If $f(a)+j\notin J(S)$ and
$f(b)+k\notin K$, then by assumption we should have $ab=0$. It follows that
$(a,f(a)+j)(b,f(b)+k)=(0,0)$, a contradiction. Therefore, again $\bar{K}^{f}$
is a weakly $J$-ideal of $R\Join^{f}J$.
\end{proof}

\begin{corollary}
\label{12}Let $R$ be a ring, $K$ a proper ideal of $R$ and $J$ a maximal ideal
of $R$. Then

\begin{enumerate}
\item If $\bar{K}=\left\{  (a,a+j):a\in R\text{, }j\in J\text{, }a+j\in
K\right\}  $ is a $J$-ideal of $R\Join J$, then $K$ is a $J$-ideal of $R$.
Moreover, the converse is true if $J\subseteq J(R)$.

\item If $\bar{K}$ is a weakly $J$-ideal of $R\Join J$, then $I$ is a weakly
$J$-ideal of $R$ and when $a+j\notin J(R),b+k\notin K$ with $a,b\in R$,
$j,k\in J$ and $ab+ak+bj+jk=0$, then $ab=0$. Moreover, the converse is true if
$J\subseteq J(R)$.
\end{enumerate}
\end{corollary}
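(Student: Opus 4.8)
The plan is to obtain Corollary \ref{12} as the specialization of Theorem \ref{10} to the case $S=R$ and $f=\mathrm{Id}_R$, the identity homomorphism on $R$. In this situation the amalgamation $R\Join^{f}J$ reduces to the amalgamated duplication $R\Join J$, and for an ideal $K$ of $S=R$ the ideal $\bar{K}^{f}$ becomes exactly $\bar{K}=\{(a,a+j):a\in R,\ j\in J,\ a+j\in K\}$. Since $\mathrm{Id}_R$ is trivially an epimorphism and $J$ is by hypothesis a maximal ideal of $R=S$, all the standing hypotheses of Theorem \ref{10} are met, so I can invoke both parts of that theorem directly and read off the conclusions.

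For the forward implications I would simply transcribe the conclusions of Theorem \ref{10}. Part (1) gives at once that if $\bar{K}$ is a $J$-ideal of $R\Join J$, then $K$ is a $J$-ideal of $R$. For part (2), substituting $f=\mathrm{Id}_R$ turns the product $(f(a)+j)(f(b)+k)$ into $(a+j)(b+k)=ab+ak+bj+jk$, so the auxiliary condition of Theorem \ref{10}(2) --- that $f(a)+j\notin J(S)$, $f(b)+k\notin K$ and $(f(a)+j)(f(b)+k)=0$ force $ab=0$ --- becomes precisely the condition that $a+j\notin J(R)$, $b+k\notin K$ and $ab+ak+bj+jk=0$ force $ab=0$, as stated.

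For the converses, the only point requiring verification is that the two technical hypotheses $f(J(R))=J(S)+J$ and $Ker(f)\subseteq J(R)$ of Theorem \ref{10} collapse to the single assumption $J\subseteq J(R)$. Indeed, with $f=\mathrm{Id}_R$ and $S=R$ one has $f(J(R))=J(R)$ and $J(S)+J=J(R)+J$, so the first condition reads $J(R)=J(R)+J$, which is equivalent to $J\subseteq J(R)$; the second condition $Ker(f)=\{0\}\subseteq J(R)$ holds automatically. Hence, under the hypothesis $J\subseteq J(R)$, the converse halves of Theorem \ref{10}(1) and (2) apply verbatim and yield the stated converses. The proof is thus entirely a matter of bookkeeping, and I do not expect a genuine obstacle: the only thing to confirm is that the identity map satisfies the two side conditions, which it does as soon as $J\subseteq J(R)$.
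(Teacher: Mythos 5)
Your proof is correct and is exactly the paper's intended route: the paper states Corollary \ref{12} without proof, treating it as the immediate specialization of Theorem \ref{10} to $S=R$ and $f=\mathrm{Id}_{R}$, under which $R\Join^{f}J$ becomes $R\Join J$, $\bar{K}^{f}$ becomes $\bar{K}$, and $(f(a)+j)(f(b)+k)$ becomes $ab+ak+bj+jk$. Your verification that the two side conditions $f(J(R))=J(S)+J$ and $\mathrm{Ker}(f)\subseteq J(R)$ collapse to the single hypothesis $J\subseteq J(R)$ is precisely the bookkeeping the paper leaves to the reader.
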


In the following example, we prove that the condition $J\subseteq J(S)$ can
not be discarded in the proof of the converses of (1) and (2) in Theorem
\ref{9}.

\begin{example}
\label{14}Let $R=%
\mathbb{Z}
(+)%
\mathbb{Z}
_{4}$, $I=0(+)%
\mathbb{Z}
_{4}$ and $J=\left\langle 2\right\rangle (+)%
\mathbb{Z}
_{4}\nsubseteq J(R)$. Then $I$ is a weakly $J$-ideal of $R$ by Theorem \ref{9}
(2). Moreover, one can easily see that there are no $(r_{1},m_{1}%
),(r_{2},m_{2})\in R$ with $(r_{1},m_{1})(r_{2},m_{2})=(0,\bar{0})$, but
$(r_{1},m_{1})\notin J(R)$, $(r_{2},m_{2})\notin I$. Now, $((0,\bar
{1}),(2,\bar{1}))$,$((1,\bar{0}),(1,\bar{0})\in R\Join J$ with $((0,\bar
{1}),(2,\bar{1}))((1,\bar{0}),(1,\bar{0}))=((0,\bar{1}),(2,\bar{1}))\in I\Join
J\backslash((0,\bar{0}),(0,\bar{0}))$. Moreover, $((0,\bar{1}),(2,\bar
{1}))\notin J(R\Join J)$ since for example, $((0,\bar{1}),(2,\bar{1}%
)\notin\bar{Q}$ where $Q=\left\langle 3\right\rangle +%
\mathbb{Z}
_{4}\in Max(S)\backslash V(J)$. Since also clearly $((1,\bar{0}),(1,\bar
{0}))\notin I\Join J$, then $I\Join J$ is not a (weakly) $J$-ideal of $R\Join
J$.
\end{example}

Similarly, we justify in the following example that if $J\nsubseteq J(R)$,
then the converses of (1) and (2) of Corollary \ref{12} are not true in general.

\begin{example}
\label{15}$R=%
\mathbb{Z}
(+)%
\mathbb{Z}
_{4}$, $K=0(+)%
\mathbb{Z}
_{4}$ and $J=\left\langle 2\right\rangle +%
\mathbb{Z}
_{4}\nsubseteq J(R)$. Then $K$ is a (weakly) $J$-ideal of $R$. Moreover, if
for $a,b\in R$, $j,k\in J$, $(a+j,m_{1})\notin J(R)$ and $(b+k,m_{2})\notin
K$, then clearly $ab+ak+bj+jk\neq0$. Take $((2,\bar{0}),(0,\bar{1}%
))=((2,\bar{0}),(2,\bar{0})+(-2,\bar{1}))$,$((0,\bar{0}),(1,\bar{0}))\in
R\Join J$. Then $((2,\bar{0}),(0,\bar{1}))((0,\bar{0}),(1,\bar{0})=((0,\bar
{0}),(0,\bar{1}))\in\bar{K}\backslash((0,\bar{0}),(0,\bar{0}))$ since
$(0,\bar{1})\in K$. But, clearly, $((2,\bar{0}),(0,\bar{1}))\notin J(R\Join
J)$ and $((0,\bar{0}),(1,\bar{0})\notin\bar{K}$. Hence, $\bar{K}$ is not a
(weakly) J-ideal of $R\Join J$.
\end{example}

Even if $Ker(f)\subseteq J(R)$, the converse of (1) of Theorem \ref{10} need
not be true if $f(J(R))\neq J(S)+J$.

\begin{example}
\label{16}Let $R=%
\mathbb{Z}
(+)%
\mathbb{Z}
_{4}$, $S=%
\mathbb{Z}
$ and $J=\left\langle 2\right\rangle $ the ideal of $S$. Consider the
homomorphism $f:R\rightarrow S$ defined by $f((r,m))=r$. Note that
$J(S)=\left\langle 0\right\rangle $, $Ker(f)=0(+)%
\mathbb{Z}
_{4}=J(R)$ and $J(S)+J=J\neq f(J(R))$. Now, $K=\left\langle 0\right\rangle $
is a (weakly) $J$-ideal of $S$. Moreover, for $(r_{1},m_{1}),(r_{2},m_{2})\in
R$, $j,k\in J$ whenever $f(r_{1},m_{1})+j\notin J(S),f(r_{2},m_{2})+k\notin
K$, then $(f(r_{1},m_{1})+j)(f(r_{2},m_{2})+k)\neq0$. Take $((-2,0),0)$,
$((1,0),1)\in R\Join^{f}J$. Then $((-2,0),0)((1,0),1)=((-2,0),0)\in\bar{K}%
^{f}$ but $((-2,0),0)\notin J(R\Join^{f}J)$ and $((1,0),1)\notin\bar{K}^{f}$.
Therefore, $\bar{K}^{f}$ is not a (weakly) $J$-ideal of $R\Join^{f}J$.
\end{example}

We have proved in section 2 that if $I_{1}$ and $I_{2}$ are weakly $J$-ideals
of a ring $R$, then so is $I+J$. However, in the next example, we clarify that
the product $I_{1}I_{2}$ need not be a weakly $J$-ideal.

\begin{example}
\label{product}Let $R=%
\mathbb{Z}
(+)%
\mathbb{Z}
_{4}$ and $I=J=0(+)%
\mathbb{Z}
_{4}$. Now, $I$ is a weakly $J$-ideal of $R$ by Theorem \ref{9} (2) and
clearly there are no $(r_{1},m_{1}),(r_{2},m_{2})\in R$ with $(r_{1}%
,m_{1})(r_{2},m_{2})=(0,\bar{0})$, but $(r_{1},m_{1})\notin J(R)$,
$(r_{2},m_{2})\notin I$. Since also $J=J(R)$, then $I\Join J$ is a weakly
$J$-ideal of $R\Join J$ by Corollary \ref{11}. On the other hand, $\left(
I\Join J\right)  ^{2}=I^{2}\Join J=\left\langle (0,\bar{0})\right\rangle \Join
J$ is not a weakly $J$-ideal. Indeed, $((2,\bar{1}),(2,\bar{0}))$,$((0,\bar
{2}),(0,\bar{1})\in R\Join J$ with $((2,\bar{1}),(2,\bar{0}))((0,\bar
{2}),(0,\bar{1}))=((0,\bar{0}),(0,\bar{2}))\in I\Join J\backslash((0,\bar
{0}),(0,\bar{0}))$ but clearly $((2,\bar{1}),(2,\bar{0}))\notin J(R\Join J)$
and $((0,\bar{2}),(0,\bar{1})\notin I^{2}\Join J$.
\end{example}

Let $R$, $S$, $J$, $f$ and $I$ be as in Theorem \ref{9} and let $T$ be an
ideal of $f(R)+J$. As a general case of $I\Join^{f}J$, one can verify that if
$f(I)J\subseteq T\subseteq J$, then $I\Join^{f}T:=\left\{  (i,f(i)+t):i\in
I\text{, }t\in T\right\}  $ is an ideal of $R\Join^{f}J$. The proof of the
following result is similar to that of (1) of Theorem \ref{9} and left to the reader.

\begin{proposition}
\label{13}Let $R$, $S$, $J$, $f$, $I$ and $T$ as above. If $I\Join^{f}T$ is a
weakly $J$-ideal of $R\Join^{f}J$, then $I$ is a weakly $J$-ideal of $R$.
\end{proposition}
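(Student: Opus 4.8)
The plan is to transcribe the forward direction of Theorem \ref{9} almost verbatim, the only new point being the passage from the $J$-component to the smaller $T$-component. Suppose $I\Join^{f}T$ is a weakly $J$-ideal of $R\Join^{f}J$; in particular it is proper. Since the identity of $R\Join^{f}J$ is $(1_{R},1_{S})=(1,f(1))$ (as $f$ is unital), and this element would lie in $I\Join^{f}T$ the moment $1\in I$, properness already forces $I\neq R$. So $I$ is proper, and it remains to take $a,b\in R$ with $0\neq ab\in I$ and $a\notin J(R)$ and to deduce $b\in I$.

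First I would lift $a$ and $b$ by choosing the trivial $J$-component, i.e.\ work with $(a,f(a)),(b,f(b))\in R\Join^{f}J$. Their product is $(ab,f(a)f(b))=(ab,f(ab))$, which is nonzero because $ab\neq0$. The key verification is that this product lies in $I\Join^{f}T$: indeed $ab\in I$ and $f(ab)=f(ab)+0$ with $0\in T$, so $(ab,f(ab))\in I\Join^{f}T$. This is exactly the place where $T$ rather than $J$ matters, and it causes no difficulty precisely because the chosen lifts have zero $J$-component, so the required $T$-coordinate is $0$.

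Next I would check $(a,f(a))\notin J(R\Join^{f}J)$. By Lemma \ref{8}, if $(a,f(a))$ lay in the Jacobson radical it would belong to every maximal ideal $M\Join^{f}J$ with $M\in Max(R)$; but $(a,f(a))\in M\Join^{f}J$ is equivalent to $a\in M$, so $a$ would lie in every maximal ideal of $R$, giving $a\in J(R)$, contrary to hypothesis. Hence $(a,f(a))\notin J(R\Join^{f}J)$.

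Finally, applying the weakly $J$-ideal property to the relation $(0,0)\neq(a,f(a))(b,f(b))\in I\Join^{f}T$ together with $(a,f(a))\notin J(R\Join^{f}J)$ yields $(b,f(b))\in I\Join^{f}T$, whose first coordinate gives $b\in I$. Thus $I$ is a weakly $J$-ideal of $R$. I do not expect a real obstacle: the entire content is the observation that the product of the zero-$J$-component lifts stays inside $I\Join^{f}T$, which $0\in T\subseteq J$ makes immediate, while the radical computation is identical to the one already used in Theorem \ref{9}.
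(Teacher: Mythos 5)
Your proof is correct and follows exactly the route the paper intends: the paper's own proof of Proposition \ref{13} is explicitly "similar to that of Theorem \ref{9}," and your argument is that same argument — lift $a,b$ to $(a,f(a)),(b,f(b))$ (which land in $I\Join^{f}T$ because the $T$-coordinate needed is $0$), rule out $(a,f(a))\in J(R\Join^{f}J)$ via Lemma \ref{8}, and apply the weakly $J$-ideal hypothesis. Your explicit remarks on properness of $I$ and on why $0\in T$ suffices are the only additions, and they are accurate.
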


The following example shows that the converse of Proposition \ref{13} is not
true in general.

\begin{example}
\label{17}Let $R$, $S$, $J$ and $I$ be as in Example \ref{14} and let
$T=\left\langle 4\right\rangle (+)%
\mathbb{Z}
_{4}$. Then $IJ\subseteq T\subseteq J$ and $I\Join T$ is not a weakly J-ideal
of $R\Join^{f}J$. Indeed, $((0,\bar{1}),(4,\bar{1}))((1,\bar{0}),(1,\bar
{0})=((0,\bar{1}),(4,\bar{1}))\in I\Join T\backslash((0,\bar{0}),(0,\bar{0}))$
but clearly $((0,\bar{1}),(4,\bar{1}))\notin J(R\Join J)$ and $((1,\bar
{0}),(1,\bar{0}))\notin I\Join T$.
\end{example}

\end{document}